\numberwithin{equation}{section}
\newtheorem{theorem}{Theorem}[section]
\newtheorem{thm}{Theorem}[section]
\newtheorem{lemma}{Lemma}[section]
\newtheorem{prop}[thm]{Proposition}
\newtheorem{alg}[thm]{Algorithm}
\newtheorem{remark}[thm]{Remark}
\newtheorem{assumption}[thm]{Assumption}
\newcommand{\norm}[1]{\left\Vert#1\right\Vert}
\newcommand{\anorm}{\norm{\,\cdot \,}}
\newcommand{\forma}{( \,\cdot \, , \, \cdot \,)}
\newcommand{\te}{\tilde e}
\newcommand{\tx}{\tilde x}
\newcommand{\txa}{\tilde x^\alpha}
\newcommand{\xa}{x^\alpha}
\newcommand{\wa}{w^\alpha}
\newcommand{\grad}{\nabla}
\newcommand{\ran}{\text{Range\,}}
\newcommand{\nulll}{\text{Null\,}}
\DeclareMathOperator{\argmin}{argmin}
\newcommand{\eps}{\varepsilon}
\renewcommand{\div}{\text{div\,}}
\newcommand{\nr}[1]{\ensuremath{\left\|{#1}\right\|}}
\newcommand{\goto}{\rightarrow}
\newcommand{\bigo}{{\mathcal O}}
\newcommand{\f}{\frac}
\DeclareMathOperator{\dd}{\, d}
\begin{document}

\title{A proof that Anderson acceleration improves the convergence rate in linearly converging fixed point methods (but not in those converging quadratically)}

\author{
Claire Evans\thanks{School of Mathematical and Statistical Sciences, Clemson University, Clemson, SC 29634 (cevans4@g.clemson.edu)}
\and
Sara Pollock\thanks{Department of Mathematics, University of Florida, Gainesville, FL
  32611-8105 (s.pollock@ufl.edu), partially supported by NSF grant DMS1719849.}
\and
Leo G. Rebholz\thanks{School of Mathematical and Statistical Sciences, Clemson University, Clemson, SC 29634 (rebholz@clemson.edu), partially supported by NSF grant DMS1522191.}
\and
Mengying Xiao 
\thanks{Department of Mathematics, College of William \& Mary, Williamsburg VA 23187 (mxiao01@wm.edu).}
}
\date{}

\maketitle

\begin{abstract}
This paper provides the first proof that Anderson acceleration (AA) improves the convergence rate of general fixed point iterations. AA has been used for decades to speed up nonlinear solvers in many applications, however a rigorous mathematical justification of the improved convergence rate has remained lacking. The key ideas of the analysis presented here are relating the difference of consecutive iterates
to residuals based on performing the inner-optimization in a Hilbert space setting, and explicitly defining the gain in the optimization stage to be the ratio of improvement over a step of the unaccelerated fixed point iteration. The main result we prove is that AA improves the convergence rate of a fixed point  iteration to first order by a factor of the gain at each step. In addition to improving the convergence rate, our results indicate that AA increases the radius of convergence. Lastly, our estimate shows that while the linear convergence rate is improved, additional quadratic terms arise in the estimate, which shows why AA does not typically improve convergence in quadratically converging fixed point iterations. 
Results of several numerical tests are given which illustrate the theory.
\end{abstract}

\section{Introduction}
We study an acceleration technique for fixed point problems called Anderson 
acceleration, in which a history of search-directions is used to 
improve the rate of convergence of fixed-point iterations.
The method was originally introduced by D.G. Anderson in 1965
in the context of integral equations  \cite{anderson65}.  
It has recently been used in many applications, 
including multisecant methods for fixed-point iterations 
in electronic structure computations \cite{FaSa09}, geometry optimization problems
\cite{PDZGQL18}, various types of flow problems \cite{LWWY12,PRX18}, 
radiation diffusion and nuclear physics \cite{AJW17,TKSHCP15},
molecular interaction \cite{SM11}, machine learning \cite{GS18},
improving the alternating projections method for computing nearest correlation matrices 
\cite{HS16},
and on a wide range of nonlinear problems in the context of generalized minimal residual 
(GMRES) methods in \cite{WaNi11}. We further refer readers to 
\cite{K18,LW16,LWWY12,WaNi11} 
and references therein for detailed discussions on both practical implementation and 
a history of the method and its applications. 

Despite a long history of use and a strong recent interest, the first mathematical convergence results for 
Anderson acceleration (for both linear and nonlinear problems) appear in 2015 in \cite{ToKe15}, 
under the usual local assumptions for convergence of Newton iterations.  
However, this theory does not prove that Anderson acceleration improves 
the convergence of a fixed point iteration, or in other words accelerates convergence
in the sense of \cite{brezinski99}.  Rather, it proves that Anderson accelerated
fixed point iterations will converge in the neighborhood of a fixed point;
and, an upper bound on the convergence rate is shown to approach from above the 
convergence rate of the underlying fixed point iteration.  
While an important stage in the developing theory, this does not explain the 
efficacy of the method, which has gained popularity as practitioners  have continued 
to observe a dramatic speedup and increase in robustness from Anderson acceleration
over a wide range of problems.

The purpose of this paper is to address this gap in the theory by proving a rigorous estimate for 
Anderson acceleration that shows a guaranteed improvement in the convergence rate for 
fixed point iterations (for general $C^2$ functions) that converge linearly (with rate $\kappa$).  
By explicitly defining the gain of the optimization stage 
at iteration $k$ to be the ratio $\theta_k$ of the optimized objective function compared to that
of the usual fixed point method, we prove the new convergence rate is 
$\theta_k ( (1-\beta_{k-1}) + \beta_{k-1}\kappa)$ at step $k$, where 
$0 < \beta_{k-1} \le 1$ is a damping parameter and $\beta_{k-1}=1$ 
produces the undamped iteration.  
The key ideas to the proof are an expansion of the residual errors, 
developing expressions relating the difference of consecutive iterates and residuals, 
and explicitly factoring in the gain from the optimization stage.   
A somewhat similar approach is used by the
authors to prove that Anderson acceleration speeds up Picard iteration convergence for 
finite element discretizations 
of the steady Navier-Stokes equations in \cite{PRX18} (without the $C^2$ assumption
on the fixed-point operator), 
and herein we extend these ideas to general fixed point iterations.

In addition to the improved linear convergence rate, our analysis also indicates that Anderson acceleration 
introduces quadratic error terms, which is consistent with known results that Anderson acceleration does 
not accelerate quadratically converging fixed point methods (see the numerical experiments section below),
establishing a barrier which theoretically prevents establishing an
improved convergence rate for general fixed-point iterations.  
A third important result we show is that both Anderson acceleration
and the use of damping can extend the radius of convergence for the method, 
i.e. Anderson acceleration can allow the iteration to converge even when outside 
the domain where the fixed point function is contractive. 
An illustrative example of this is shown in \S \ref{subsec:nse}.

This paper is arranged as follows.  
In \S \ref{sec:anderson}, we review Anderson acceleration, describe the problem setting, 
and give some basic definitions and notation.  
\S \ref{sec:tech} gives several important technical results to make the later analysis 
cleaner and simpler.  
\S \ref{sec:c-rates} gives the main result of the paper, proving that the linear convergence rate is improved by Anderson acceleration, but additional quadratic error terms arise.  
\S \ref{sec:numerics} gives results from numerical tests, with the intent of illustrating the current contributions to the theory.  
Conclusions are given in the final section.
 
\section{Anderson acceleration}\label{sec:anderson}
In what follows, we will consider a fixed-point operator $g: X \goto X$
where $X$ is a  Hilbert space with norm $\anorm$ and inner-product $\forma$.  
The Anderson acceleration algorithm with depth $m$ applied to the fixed-point problem 
$g(x) = x$ reads as follows.

\begin{alg}[Anderson iteration] \label{alg:anderson}
The Anderson-acceleration with depth $m \ge 0$ and damping factors $0 < \beta_k \le 1$ 
reads: \\ 
Step 0: Choose $x_0\in X.$\\
Step 1: Find $\tilde x_1\in X $ such that $\tilde x_1 = g(x_0)$.  
Set $x_1 = \tilde x_1$. \\
Step $k$: For $k=1,2,3,\ldots$ Set $m_k = \min\{ k, m\}.$\\
\indent [a.] Find $\tilde x_{k+1} = g(x_k)$. \\
\indent [b.] Solve the minimization problem for $\{ \alpha_{j}^{k+1}\}_{k-m_k}^k$
\begin{align}\label{eqn:opt-v0}
\min_{\sum_{j=k-m_k}^{k} \alpha_j^{k+1}  = 1} 
\left\| \sum_{j=k-m_k}^{k} \alpha_j^{k+1}( \tx_{j+1} - x_j) \right\| .
\end{align}
\indent [c.] For damping factor $0 < \beta_k \le 1$, set
\begin{align}\label{eqn:update-v0}
x_{k+1} =  (1-\beta_k)\sum_{j= k-m_k}^k \alpha_j^{k+1} x_{j}
 + \beta_k \sum_{j= k-m_k}^k \alpha_j^{k+1} \tx_{j+1}.
\end{align}
\end{alg}

We will use throughout this work the stage-$k$ residual and error terms 
\begin{align}\label{eqn:update-err}
e_{k}  \coloneqq x_k - x_{k-1}, \quad
\tilde e_k  \coloneqq \tilde x_k - \tilde x_{k-1}, \quad
w_k  \coloneqq \tilde x_k - x_{k-1}. 
\end{align}
Define the following averages given by the solution 
$\alpha^{k+1} = \{\alpha_k^{k+1}\}_{j = k-m_k}^k$ to the optimization problem
\eqref{eqn:opt-v0} by 
\begin{align}\label{eqn:err-k}
x_k^\alpha = \sum_{j = k-m_k}^k \alpha^{k+1}_j x_j, \quad
\tx_{k+1}^\alpha = \sum_{j = k-m_k}^k \alpha^{k+1}_j \tx_{j+1}, \quad
w_{k+1}^\alpha = \sum_{j = k-m_k}^k \alpha^{k+1}_j (g(x_j) -x_j).
\end{align}
Then the update \eqref{eqn:update-v0} can be written in terms of the averages
$\xa$ and $\txa$,
\begin{align}\label{eqn:update-v1}
x_{k+1} = (1-\beta_k) x_k^\alpha + \beta_k \tx_{k+1}^\alpha,
\end{align}
and the stage-$k$ gain $\theta_k$ can be defined by
\begin{align}\label{eqn:thetak}
\nr{w_{k+1}^\alpha} = \theta_k \nr{w_{k+1}}. 
\end{align}
The key to showing the acceleration of this technique defined by taking a linear combination
of a history of steps corresponding to the coefficients of the optimization problem
\eqref{eqn:opt-v0} is connecting the gain $\theta_k$ given by \eqref{eqn:thetak} to
the differences of consecutive iterates and residual terms in \eqref{eqn:err-k}.
As such, the success (or failure) of the algorithm to reduce the residual is coupled to 
the success of the optimization problem at each stage of the algorithm.
As $\alpha_{k}^{k+1}= 1, \alpha^{k+1}_j = 0, j \ne k$ is an admissible solution to
\eqref{eqn:opt-v0}, it follows immediately that $0 \le \theta_k \le 1$.  
As discussed in the remainder, the improvement in the contraction rate of the fixed-point
iteration is characterized by $\theta_k$.

The two main components of the proof of residual convergence at an accelerated rate
are the expansion of the residual $w_{k+1}$ into $\wa_k$ and error terms
$e_{k-m_{k-1}}, \ldots, e_k$; and, control of the $e_j$'s in terms of the corresponding
$w_j$'s.  In the next section, the first of these is established for general $m$, 
and the second for the particular cases of depth $m=1$ and $m=2$, with the result
then extrapolated for general $m$.

\section{Technical preliminaries}\label{sec:tech}
There are two main technical results used in our theory.  The first is an 
expansion of the residual, and the second is a set of 
estimates relating the difference of consecutive iterates to residuals.
These are shown in \S \ref{subsec:resi-expansion} and \S \ref{subsec:ew}, 
respectively. The main results which depend on these estimates 
are then presented in \S \ref{sec:c-rates}.

For the bounds in \S\ref{subsec:ew} relating the difference of consecutive iterates
 to residuals, the operator $g: X \goto X$ is assumed Lipschitz continuous and contractive, 
as in \cite{PRX18}; see Assumption \ref{assume:contract}, below.
The results of \S \ref{subsec:resi-expansion} do not require the contractive
property, but require the assumption that $g$ is twice
continuously differentiable to allow for Taylor expansions of the error terms.
We denote the derivatives of $g$ by $g'(\cdot; \cdot)$ and $g''(\cdot; \cdot, \cdot)$,
and employ the standard notation that forms $g'(\cdot; \cdot)$ and $g''(\cdot; \cdot, \cdot)$
are linear with respect to the arguments to the right of the semicolon.
\begin{assumption}\label{assume:g} 
Let $X$ be a Hilbert space and $g: X \goto X$.  
Assume $g$ has a fixed point $x^\ast \in X$, 
and there are positive constants $\kappa$ and $\hat \kappa$ with
\begin{enumerate}
\item $g \in C^2(X)$.
\item $\nr{g'(y;u)} \le \kappa\nr{u}$ for each $y$ and all $u \in X$.
\item $\nr{g''(y;u,v)} \le \hat \kappa\nr{u}\nr{v}$ for each $y$ and all $u,v \in X$. 
\end{enumerate}
\end{assumption}
\begin{assumption}\label{assume:contract}
Let $X$ be a Hilbert space and $g: X \goto X$.  
Assume $\nr{g(y)-g(x)} \le \kappa \nr{x-y}$ for every $x,y\in X$, with $\kappa<1$.
\end{assumption}

By standard fixed-point theory, Assumption \ref{assume:contract} implies the existence
of a unique fixed-point $x^\ast$ of $g$ in $X$.
In a slight abuse of notation, the difference of consecutive iterates, 
$e_k = x_k - x_{k-1}$ is loosely referred to in this manuscript as an error term.  
As shown carefully in \cite{PRX18}, the true error $x_k - x^\ast$ is controlled
in norm by $e_j$, $j = k-m_k, \ldots, k$, for the depth $m$ algorithm so long as 
the coefficients from the optimization remain bounded.  
In the results of \S \ref{sec:c-rates}, the residual $w_k$ is shown to converge to zero
under Assumption \ref{assume:contract}. This is sufficient to establish convergence
of the error  $x_k - x^\ast$ to zero as
\[
\nr{x_k - x^\ast} \le \nr{x_k-g(x_k) } + \nr{g(x_k) - g(x^\ast)}
\le \nr{w_{k+1}} + \kappa \nr{x_k - x^\ast},
\]
by which $\nr{x_k - x^\ast} \le (1-\kappa)^{-1}\nr{w_{k+1}}$.

\subsection{Expansion of the residual}\label{subsec:resi-expansion}
Based on Assumption \ref{assume:g} the error term $\te_k$ of \eqref{eqn:err-k}
has a Taylor expansion
\begin{align}\label{eqn:T1}
\te_{k+1} \coloneqq g(x_{k}) - g(x_{k-1}) = \int_0^1 g'(z_k(t); e_k) \dd t,
\end{align}
where $z_{k}(t) =  x_{k-1}+te_k$.
For each $t \in [0,1]$ a second application of Taylor's Theorem provides
\begin{align}\label{eqn:T2}
g'(z_{k-1}(t);\cdot) = g'(z_k(t);\cdot) 
+ \int_0^1 g''(\hat z_{k,t}(s);z_{k-1}(t) - z_k(t),\cdot) 
\dd s, 
\end{align}
where $\hat z_{k,t}(s) =  z_{k-1}(t)+s(z_k(t) - z_{k-1}(t))$.
Using \eqref{eqn:T1}-\eqref{eqn:T2} we next derive an expansion of the
residual $w_{k+1}$ in terms of the differences of consecutive iterates
 $e_k, \ldots, e_{k-m_{k-1}}$. 
We start with the definition of the residual by \eqref{eqn:err-k} and 
the expansion of iterate $x_k$ by the update \eqref{eqn:update-v1}.
\begin{align}\label{eqn:wk001}
w_{k+1} = g(x_k) - x_k 
 = (1-\beta_{k-1}) (g(x_k) - \xa_{k-1}) + \beta_{k-1}(g(x_k) - \txa_k).
\end{align} 
Expanding the first term on the right hand side of \eqref{eqn:wk001} yields
\begin{align}\label{eqn:wk002}
g(x_k) - \xa_{k-1}& = \sum_{j = k-m_{k-1}-1}^{k-1} \alpha^k_j (g(x_k) - x_j)
\nonumber \\
& = \sum_{j = k-m_{k-1}-1}^{k-1} \alpha^k_j (g(x_j) - x_j)
+ \sum_{j = k-m_{k-1}}^{k} \left( \sum_{n = k-m_{k-1}-1}^{j-1}\alpha^k_n\right) 
(g(x_j) - g(x_{j-1}))
\nonumber \\
& = w_k^\alpha  
+ \sum_{j = k-m_{k-1}}^{k} \gamma_j \te_{j+1}, 
\end{align}
where 
\begin{align}\label{eqn:gamma}
\gamma_{j} \coloneqq \sum_{n = k-m_{k-1}-1}^{j-1} \alpha^k_n.
\end{align}
It is worth noting that $\gamma_{k} = 1$. 
Expanding the second term on the right hand side of \eqref{eqn:wk001}, we get
\begin{align}\label{eqn:wk003}
g(x_k) - \txa_{k} = \sum_{j = k-m_{k-1}-1}^{k-1} \alpha^k_j (g(x_k) - g(x_j))
= \sum_{j = k-m_{k-1}}^{k} \gamma_{j} \te_{j+1}.
\end{align}
Reassembling \eqref{eqn:wk001} with \eqref{eqn:wk002} and \eqref{eqn:wk003}
followed by \eqref{eqn:T1}, we have
\begin{align}\label{eqn:wk004}
w_{k+1}  = (1-\beta_{k-1}) w_k^\alpha + \sum_{j = k-m_{k-1}}^{k} \gamma_{j} \te_{j+1}
 = (1-\beta_{k-1}) w_k^\alpha +
  \sum_{j = k-m_{k-1}}^{k} \gamma_{j} \int_0^1 g'(z_j(t); e_j) \dd t.
\end{align}
We now take a closer look at the last term of \eqref{eqn:wk004}.
For each $j = k-m_{k-1},\ldots, k-1$, adding and subtracting intermediate averages
allows
\begin{align}\label{eqn:wk004int1}
\int_0^1 g'(z_j(t);e_j) \dd t = 
\int_0^1 g'(z_k(t);e_j) \dd t + 
\sum_{n=j}^{k-1} \int_0^1 g'(z_n(t);e_j) - g'(z_{n+1}(t); e_j) \dd t.
\end{align}
Applying now \eqref{eqn:T2} to each summand of \eqref{eqn:wk004int1} then yields
\begin{align}\label{eqn:wk004int2}
\int_0^1 g'(z_j(t);e_j) \dd t = 
\int_0^1 g'(z_k(t);e_j) \dd t + 
\sum_{n=j}^{k-1} \int_0^1 \int_0^1 g''(\hat z_{n+1,t}(s));z_n(t) - z_{n+1}(t),
  e_j ) \dd s\dd t.
\end{align}
Summing over the $j$'s after \eqref{eqn:wk004int2} is applied to each term,
the sum on the right hand side of \eqref{eqn:wk004} may be expressed as
\begin{align}\label{eqn:wk005}
\sum_{j = k-m_{k-1}}^{k} \!\! \gamma_{j} \int_0^1 g'(z_j(t);e_j) \dd t
& = 
\int_0^1 g'(z_k(t); \sum_{j = k-m_{k-1}}^k  \!\! \gamma_j e_j )\dd t 
\nonumber \\
&+ \sum_{j = k-m_{k-1}}^{k-1} 
  \int_0^1 \int_0^1\left( \sum_{n=j}^{k-1} 
  g''(\hat z_{n+1,t}(s);z_n(t) - z_{n+1}(t), \gamma_j e_j) \right)\dd s \dd t.
\end{align}
The next calculation shows that $\sum_{j = k-m_{k-1}}^k \gamma_j e_j $ 
is equal to $\beta_{k-1}w_k^\alpha$. 
First observe that $\gamma_{j} - \gamma_{j-1} = \alpha^{k}_{j-1}$ and 
$\gamma_{k-m_{k-1}} = \alpha^{k}_{k-m_{k-1}-1}$. 
Separating the first term of the sum and using $\gamma_{k}=1$,
\begin{align}\label{eqn:wk006}
\sum_{j=k-m_{k-1}}^k \gamma_{j}e_j 
&= x_k-x_{k-1} +  \sum_{j = k-m_{k-1}}^{k-1}\gamma_{j} (x_j-x_{j-1})
\nonumber \\
&= x_k-x_{k-1} + \gamma_{k-1}x_{k-1} - \sum_{j = k-m_{k-1}-1}^{k-2} \alpha_j^k x_j 
\nonumber \\
&= x_k-\alpha^k_{k-1}x_{k-1} - \sum_{j = k-m_{k-1}-1}^{k-2} \alpha_j^k x_j 
= x_k - \xa_{k-1}.
\end{align}
From \eqref{eqn:wk006} and the decomposition of  $x_k$ in terms of update 
\eqref{eqn:update-v0}, we have that
\begin{align}\label{eqn:wk007}
\sum_{j=k-m_{k-1}}^k \gamma_{j}e_j = 
x_k - \xa_{k-1} = (1-\beta_{k-1})x_{k-1}^\alpha + \beta_{k-1} \txa_k - \xa_{k-1}
= \beta_{k-1}(\txa_k - \xa_{k-1}) = \beta_{k-1} \wa_k.
\end{align}
Putting \eqref{eqn:wk007} together with \eqref{eqn:wk005} and \eqref{eqn:wk004}
then yields
\begin{align}\label{eqn:wk008}
w_{k+1} &= \int_0^1 (1-\beta_{k-1})\wa_k + \beta_{k-1} g'(z_k(t); \wa_k)  \dd t
\nonumber \\
&+ \sum_{j = k-m_{k-1}}^{k-1}  
  \int_0^1 \int_0^1\left( \sum_{n=j}^{k-1} 
  g''(\hat z_{n+1,t}(s);z_n(t) - z_{n+1}(t), \gamma_j e_j)\right) \dd s \dd t.
\end{align}
Based on the expansion of $w_{k+1}$ by \eqref{eqn:wk008} we now proceed to bound the 
higher order terms in the particular cases $m=1$ and $m=2$ to establish 
convergence of Algorithm \ref{alg:anderson} at an accelerated rate.

\subsection{Relating the difference of consecutive iterates to residuals}\label{subsec:ew}
We now derive estimates to bound (in norm) the $e_j$'s from the right hand side of 
\eqref{eqn:wk008} by the corresponding $w_j$'s.
The bounds in this subsection hold under Assumption \ref{assume:contract}, 
namely $g$ is a contractive operator.

Under Assumption \ref{assume:contract} we have the inequality
\begin{align}\label{eqn:ew001}
(1-\kappa)\nr{e_n} \le \nr{e_n} - \nr{\te_{n+1}} \le \nr{\te_{n+1} - e_{n}} = 
\nr{w_{n+1}-w_n}.
\end{align}

The next lemma establishes a bound for $e_{j-1}$ in terms of $w_j$ and $w_{j-1}$ in
the case of depth $m=1$.  The subsequent lemma generalizes the same idea for general $m$.
\begin{lemma}\label{lem:ew-m1}
Under the conditions of Assumption \ref{assume:contract}, the following bounds hold true:
\begin{align}\label{eqn:ew-m1a}
|\alpha^{j}_{j-1}|\nr{e_{j-1}} &\le \f{1}{1-\kappa} \nr{w_{j-1}},  
\\ \label{eqn:ew-m1b}
|\alpha^{j}_{j-2}|\nr{e_{j-1}} &\le \f{1}{1-\kappa} \nr{w_{j}}.
\end{align}
\end{lemma}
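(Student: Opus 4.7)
The plan is to exploit the fact that for $m=1$ the minimization \eqref{eqn:opt-v0} at step $k=j-1$ reduces to a one-dimensional least-squares problem in the Hilbert space $X$, which admits an explicit solution. By the definition $w_{n+1} = \tx_{n+1} - x_n$ in \eqref{eqn:update-err}, the objective becomes $\nr{\alpha^{j}_{j-2}w_{j-1}+\alpha^{j}_{j-1}w_j}$ subject to $\alpha^{j}_{j-2}+\alpha^{j}_{j-1}=1$. Eliminating the constraint by setting $\alpha \coloneqq \alpha^{j}_{j-1}$ (so $\alpha^{j}_{j-2}=1-\alpha$), the problem reduces to minimizing $\nr{w_{j-1}+\alpha(w_j-w_{j-1})}$ over $\alpha\in\mathbb{R}$, i.e.\ orthogonally projecting $-w_{j-1}$ onto $\mathrm{span}\{w_j-w_{j-1}\}$.

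The second step is to write the first-order optimality condition for this 1-D projection, which yields the explicit formula $\alpha^{j}_{j-1} = -(w_{j-1},w_j-w_{j-1})/\nr{w_j-w_{j-1}}^2$, and correspondingly, using the identity $\nr{w_j-w_{j-1}}^2 = (w_j,w_j-w_{j-1})-(w_{j-1},w_j-w_{j-1})$, the companion formula $\alpha^{j}_{j-2} = 1-\alpha^{j}_{j-1} = (w_j,w_j-w_{j-1})/\nr{w_j-w_{j-1}}^2$. Applying Cauchy--Schwarz to each numerator then gives the coefficient bounds $|\alpha^{j}_{j-1}|\,\nr{w_j-w_{j-1}} \le \nr{w_{j-1}}$ and $|\alpha^{j}_{j-2}|\,\nr{w_j-w_{j-1}} \le \nr{w_j}$.

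The last step is to relate $\nr{e_{j-1}}$ to $\nr{w_j-w_{j-1}}$, which is immediate from \eqref{eqn:ew001} applied at $n=j-1$: contractivity of $g$ yields $(1-\kappa)\nr{e_{j-1}} \le \nr{w_j-w_{j-1}}$. Multiplying the two coefficient estimates from the previous paragraph by $\nr{e_{j-1}}$ and dividing by $\nr{w_j-w_{j-1}}$ using this bound produces \eqref{eqn:ew-m1a} and \eqref{eqn:ew-m1b}, respectively.

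I do not anticipate any real obstacle in this $m=1$ case beyond the degenerate scenario $w_j = w_{j-1}$, in which \eqref{eqn:ew001} already forces $e_{j-1}=0$ so both inequalities reduce to $0 \le 0$ (provided any minimizer with finite coefficients is selected). The genuinely hard step will be the analogous estimate for arbitrary depth $m$: there the $m+1$ coefficients are not available by a one-line projection formula, so the ``optimality condition plus Cauchy--Schwarz'' argument has to be replaced by a more careful geometric/inductive argument that bounds each $|\alpha^{j}_\ell|$ in terms of successive differences of residuals $w_j-w_{j-1},\,w_{j-1}-w_{j-2},\ldots$, which is presumably the content of the subsequent lemma.
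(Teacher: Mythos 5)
Your proposal is correct and follows essentially the same route as the paper: reduce the constrained minimization to a one-parameter least-squares problem, read off the coefficient bounds from the first-order optimality condition via Cauchy--Schwarz, and combine with \eqref{eqn:ew001} at $n=j-1$. The only cosmetic difference is that the paper obtains the bound on $\alpha^j_{j-2}$ by re-parametrizing the objective a second time (the $\gamma$-form \eqref{eqn:m1-gamma}), whereas you extract it from the single $\eta$-form using the identity $\nr{w_j-w_{j-1}}^2=(w_j,w_j-w_{j-1})-(w_{j-1},w_j-w_{j-1})$; both yield the same estimates.
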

\begin{proof}
Begin by rewriting the optimization problem \eqref{eqn:opt-v0} in the equivalent form
\[
\eta = \argmin \nr{w_{j-1} + \eta (w_j - w_{j-1})}^2,
\]
where $\alpha^{j}_{j-1} = \eta$ and $\alpha^j_{j-2} = 1-\eta$.
The critical point $\eta$ then satisfies
$
\eta \nr{w_j - w_{j-1}}^2 = (w_{j-1}, w_j-w_{j-1}).
$
Applying Cauchy-Schwarz and triangular inequalities yields
$
|\eta| \nr{w_j - w_{j-1}} \le \nr{w_{j-1}}.
$
Applying \eqref{eqn:ew001} with $n = j-1$ yields the result \eqref{eqn:ew-m1a}.

Next, rewrite the optimization problem \eqref{eqn:opt-v0} in another equivalent form, 
\begin{align}\label{eqn:m1-gamma}
\gamma = \argmin \nr{w_{j} - \gamma (w_j - w_{j-1})}^2,
\end{align}
where the equivalence follows with $\alpha^j_{j-2} = \gamma$ and 
$\alpha^j_{j-1} = 1-\gamma$.  Following the same procedure as above yields
$|\gamma| \nr{w_{j} - w_{j-1}} \le \nr{w_j}$.  Applying \eqref{eqn:ew001} 
at level $n-1$ then yields the second result \eqref{eqn:ew-m1b}.
\end{proof}
The use of $\gamma$ as the second parameter of in the proof above is not purely 
coincidental, as this $\gamma$ agrees with the $\gamma^j_{j-1}$ used in 
\S \ref{subsec:resi-expansion}.
The same essential technique yields the necessary bounds for $m\ge2$. 
The estimate for general $m$ is given
in the lemma below, with the particular estimate for $m=2$ given as a proposition. 

As in the $m=1$ case above, two forms of the optimization problem are used.
The $\gamma$-formulation is used to bound the terms $\gamma_j\nr{e_j}$ that
appear from the expansion \eqref{eqn:wk008}; whereas, the $\eta$-formulation is
used to bound the terms $\nr{e_j}$ that appear in the numerator without leading 
optimization coefficients.  It is then of particular importance that estimates of the
form $c \nr{e_j} \le \Sigma k_n\nr{w_n}$ have the property that $c$ is bounded away 
from zero.
This is a reasonable assumption on the leading coefficient $c=\alpha^k_{k-1}$ for
each $k$, as some nonvanishing component in the latest search direction 
is necessary for progress.  It is also a reasonable assumption on 
$c = 1-\alpha^k_{k-m_{k-1}-1}$, meaning the coefficient of the earliest search 
direction considered is bounded away from unity.  Presumably, 
$|\alpha^k_{k-m_{k-1}-1}|<1$ is a reasonable assumption to make, although this is
not explicitly required ({\em cf.,} \cite{PRX18}).

\begin{lemma}\label{lem:ew-gm}
Under the conditions of Assumption \ref{assume:contract}, the following bounds hold true:
\begin{align}\label{eqn:gm-jeta}
|\alpha^j_{j-1}\nr{e_{j-1}} &\le \f{1}{1-\kappa}
\left(|\eta_{j-1}|\nr{w_{j-1}}
+ \sum_{n = j-{m_{j-1}}-1}^{j-2} |\alpha_{n-1}^j| \nr{w_n}\right)
\\ \label{eqn:gm-jmeta}
|1-\alpha^j_{j-{m_{j-1}-1}}|\nr{e_{j-m_{j-1}}} & \le \f{1}{1-\kappa}\left(
  \sum_{n = j-{m}+2}^{j} |\alpha_{n-1}^j| \nr{w_n}
+ |\eta_{j-m+2}|\nr{w_{j-m+1}} + \nr{w_{j-m}}
\right)
\\ \label{eqn:gm-ngam}
|\gamma_{p-1}|\nr{e_{p-1}} & \le \f{1}{1-\kappa} \left( 
\sum_{n = j-m_{j-1}}^{p-2} |\alpha^j_{n-1}| \nr{w_n}
+ |\gamma_{p-2}|\nr{w_{p-1}}+ |\gamma_{p}| \nr{w_{p}} \right.
\nonumber \\
&+ \left. \sum_{n = p+1}^{j} |\alpha^j_{n-1}|\nr{w_n}\right). 
\end{align}
with $\eta_{j-1} = \alpha^j_{j-1} + \alpha^j_{j-2}$ as in \eqref{eqn:mg-eta}, and 
$\gamma_p$,$\gamma_{p-1}$,$\gamma_{p-2},$ given below by \eqref{eqn:mg-gamma}.
\end{lemma}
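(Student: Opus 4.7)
The plan is to extend the strategy of Lemma \ref{lem:ew-m1} uniformly: for each bound, reparametrize the constrained minimization \eqref{eqn:opt-v0} so that the scalar quantity to be bounded multiplies a single difference $w_{q+1}-w_q$; apply the first-order optimality condition from an admissible one-parameter variation to recover that scalar as an inner product; bound its absolute value by Cauchy--Schwarz and the triangle inequality; and invoke \eqref{eqn:ew001} with $n=q$ to convert $\nr{w_{q+1}-w_q}$ into $(1-\kappa)\nr{e_q}$. In every case the admissible variation is a simultaneous opposite perturbation of two adjacent coefficients $\alpha^j_n,\alpha^j_{n+1}$, which preserves the sum constraint automatically and so introduces no Lagrange-multiplier term.

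For \eqref{eqn:gm-jeta} I would group the two latest terms of $r := \sum_{n}\alpha^j_n w_{n+1}$ as $\alpha^j_{j-1}(w_j - w_{j-1}) + \eta_{j-1}w_{j-1}$, leaving the earlier terms intact. Shifting mass between $\alpha^j_{j-1}$ and $\alpha^j_{j-2}$ keeps $\eta_{j-1}$ fixed, so orthogonality at the minimum gives $(r, w_j - w_{j-1}) = 0$; solving for $\alpha^j_{j-1}$, applying Cauchy--Schwarz, and invoking \eqref{eqn:ew001} with $n=j-1$ yields the first bound after reindexing the residual sum. Bound \eqref{eqn:gm-jmeta} is the mirror construction at the opposite end of the window: with $N = j-m_{j-1}-1$ and $M=j-1$, the identity $1-\alpha^j_N = \sum_{n>N}\alpha^j_n$ combined with the right-to-left Abel summation yields an expansion $r = w_{N+1} + (1-\alpha^j_N)(w_{N+2}-w_{N+1}) + R$, where $R$ is a weighted sum of later differences using complementary partial sums. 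Unwinding $R$ into $\alpha^j$-weighted terms plus a single endpoint contribution identified with $\eta_{j-m_{j-1}+2}$, and then running optimality $\to$ Cauchy--Schwarz $\to$ \eqref{eqn:ew001} at $n=j-m_{j-1}$, delivers the second bound.

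For \eqref{eqn:gm-ngam} I would instead use the left-to-right Abel form $r = w_{M+1} - \sum_{i=N+1}^{M}\gamma_i(w_{i+1}-w_i)$, in which the partial sum $\gamma_i$ appears directly as the coefficient of a single difference. Isolating the summand $i=p-1$ and unwinding the Abel summation on each of the two surviving blocks ($N+1 \le i \le p-2$ and $p \le i \le M$) produces
\[
r = -\gamma_{p-1}(w_p - w_{p-1}) + \gamma_p w_p - \gamma_{p-2}w_{p-1} + \sum_{n=j-m_{j-1}}^{p-2}\alpha^j_{n-1}w_n + \sum_{n=p+1}^{j}\alpha^j_{n-1}w_n.
\]
The one-parameter variation that shifts mass between $\alpha^j_{p-2}$ and $\alpha^j_{p-1}$ alters only $\gamma_{p-1}$ among the partial sums, so $(r, w_p - w_{p-1}) = 0$ at the optimum; solving for $\gamma_{p-1}$, applying Cauchy--Schwarz term by term, and finishing with \eqref{eqn:ew001} at $n = p-1$ yields \eqref{eqn:gm-ngam}.

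The main obstacle is the algebraic bookkeeping around the distinguished index, especially for \eqref{eqn:gm-ngam}: one must unwind the Abel summation correctly on both sides of $p-1$, identify the surviving endpoint contributions $\gamma_{p-2}w_{p-1}$ and $\gamma_p w_p$, and verify that the remaining $\alpha$-weighted sums have precisely the index ranges stated. A secondary subtlety is selecting the direction of the Abel summation (left-to-right vs.\ right-to-left) for each bound so that the quantity named on the LHS arises as the coefficient of a difference $w_{q+1}-w_q$ rather than of some neighbour of it.
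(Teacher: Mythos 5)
Your proposal is correct and follows essentially the same route as the paper: the constraint-preserving perturbation of an adjacent pair $\alpha^j_{n},\alpha^j_{n+1}$ is exactly the partial derivative with respect to the single free variable $\eta_n$ (or $\gamma_{n}$) in the paper's unconstrained reparametrizations \eqref{eqn:mg-eta}--\eqref{eqn:mg-gamma}, yielding the same orthogonality relations $(r, w_{q+1}-w_q)=0$, and your Abel-summation regroupings (including the expansion you display for \eqref{eqn:gm-ngam}) coincide with the paper's recombinations via $\eta_{n-1}-\eta_n=\alpha^j_{n-2}$ and $\gamma_n-\gamma_{n-1}=\alpha^j_{n-1}$. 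The finish by Cauchy--Schwarz and \eqref{eqn:ew001} at the indicated levels is likewise identical.
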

\begin{proof}
The optimization problem \eqref{eqn:opt-v0} at level $j$ is to minimize
\[
\nr{\sum_{n = j-m_{j-1}-1}^{j-1} \alpha^j_n w_{n+1}} \ 
~\text{subject to } \sum_{n = j-m_{j-1}-1}^{j-1}\alpha^j_n = 1.
\]
Differencing from the left and right respectively, this can be posed as 
the following unconstrained optimization problems:
\begin{align}\label{eqn:mg-eta}
\text{ minimize }& \nr{ w_{j-m_{j-1}} 
+ \sum_{n = j-m_{j-1}+1}^j \eta_n (w_n - w_{n-1})}^2, 
& \eta_n &=  \sum_{i= n-1}^{j-1} \alpha_i^j.
\\ \label{eqn:mg-gamma}
\text{ minimize }& \nr{ w_{j} - \sum_{n = j-m_{j-1}+1}^{j} \gamma_{n-1} 
(w_n - w_{n-1})}^2,
& \gamma_n &= \sum_{i = j-m_{j-1}-1}^{n-1}\alpha_i^j.
\end{align}
Note that \eqref{eqn:mg-gamma} coincides with \eqref{eqn:gamma} which agrees
with the unconstrained form of the optimization problem in for instance \cite{FaSa09}.
{To help reduce notation,} denote $m = m_{j-1}$ for the remainder of the proof.

Starting with estimate \eqref{eqn:gm-jeta} we are concerned with bounding in norm the 
leading term difference term $w_j - w_{j-1}$.
Expanding the norm squared \eqref{eqn:mg-eta} as an inner-product and seeking the 
critical point for $\eta_j$ yields
\[
\eta_j\nr{w_j - w_{j-1}}^2 + (w_j - w_{j-1},w_{j-m}) 
+ \sum_{n = j-m+1}^{j-1} \eta_n(w_j - w_{j-1}, w_n - w_{n-1}) = 0.
\]
Recombining the terms inside the sum, noting 
$\eta_{n-1} - \eta_n = \alpha^j_{n-2}$, and $\eta_j = \alpha^j_{j-1}$ obtain
\[
\alpha^j_{j-1}\nr{w_j - w_{j-1}}^2 = 
-(\alpha^j_{j-1} + \alpha^j_{j-2}) (w_j - w_{j-1},w_{j-1})  
- \sum_{n = j-m}^{j-2} \alpha^j_{n-1} (w_j - w_{j-1}, w_n).
\]
Applying Cauchy-Schwarz and triangle inequalities then yields
\[
|\alpha^j_{j-1}|\nr{w_j - w_{j-1}} \le |\alpha^j_{j-1} + \alpha^j_{j-2}|\nr{w_{j-1}}
+ \sum_{n = j-{m}}^{j-2} \alpha_{n-1}^j \nr{w_n} .
\]
Applying \eqref{eqn:ew001}, the result \eqref{eqn:gm-jeta} follows.

Following the same idea for estimate \eqref{eqn:gm-jmeta}, we are now concerned with
bounding in norm the final difference term $w_{j-m+1} - w_{j-m}$. Again expanding
\eqref{eqn:mg-eta} as an inner-product and seeking the critical point this time
for $\eta_{j-m+1}$ yields
\[
\eta_{j-m+1}\nr{w_{j-m+1} - w_{j-m}}^2 + (w_{j-m+1} - w_{j-m},w_{j-m}) 
+ \sum_{n = j-m+2}^{j} \eta_n(w_{j-m+1} - w_{j-m}, w_n - w_{n-1}) = 0.
\]
Recombining terms noting $\eta_{j-m+1} = 1-\alpha^j_{j-m-1}$
\begin{align*}
(1-\alpha^j_{j-m-1})\nr{w_{j-m+1} - w_{j-m}}^2 &= 
 \sum_{n = j-m+2}^{j} \alpha^j_{n-1} (w_{j-m+1} - w_{j-m}, w_n) 
\nonumber \\
& - (w_{j-m+1} - w_{j-m}, \eta_{j-m+2}w_{j-m+1} +  w_{j-m}).
\end{align*}
Applying Cauchy-Schwarz and triangle inequalities then yields
\[
|1-\alpha^j_{j-m-1}|\nr{w_{j-m+1} - w_{j-m}} 
\le \left(\sum_{n = j-{m}+2}^{j} |\alpha_{n-1}^j| \nr{w_n}\right) 
+ |\eta_{j-m+2}|\nr{w_{j-m+1}} + \nr{w_{j-m}}.
\]
The result \eqref{eqn:gm-jmeta} follows by \eqref{eqn:ew001}.

Similarly for \eqref{eqn:gm-ngam}, 
expanding the norm of \eqref{eqn:mg-gamma} as an inner product and seeking
the critical point for each $\gamma_p$ yields
\[
\gamma_{p-1}\nr{w_p - w_{p-1}}^2 = (w_p - w_{p-1},w_j) 
- \sum_{n = j-m+1, n \ne p}^{j} \gamma_{n-1} (w_p - w_{p-1},w_n-w_{n-1}).
\]
Recombining the terms inside the sum using $\gamma_n - \gamma_{n-1} = \alpha^j_{n-1}$, 
and $\gamma_{j-m} = \alpha^j_{j-m-1}$, we obtain
\begin{align*}
\gamma_{p-1}\nr{w_p - w_{p-1}}^2 &= 
\sum_{n = j-m}^{p-2} \alpha^j_{n-1}(w_p-w_{p-1},w_n) 
- \gamma_{p-2}(w_p-w_{p-1}, w_{p-1})+ \gamma_{p}(w_p-w_{p-1}, w_{p})
\nonumber \\
&+\sum_{n = p+1}^{j} \alpha^j_{n-1}(w_p-w_{p-1},w_n).
\end{align*}
Applying now Cauchy-Schwarz and triangle inequalities,
\begin{align*}
|\gamma_{p-1}|\nr{w_p - w_{p-1}} &\le 
\sum_{n = j-m}^{p-2} |\alpha^j_{n-1}| \nr{w_n}
+ |\gamma_{p-2}|\nr{w_{p-1}}+ |\gamma_{p}| \nr{w_{p}}
+\sum_{n = p+1}^{j} |\alpha^j_{n-1}|\nr{w_n}. 
\end{align*}
Applying \eqref{eqn:ew001}, the result \eqref{eqn:gm-ngam} follows.
\end{proof}

For the convenience of subsequent calculations, the bounds \eqref{eqn:mg-eta}
and \eqref{eqn:mg-gamma} used to bound $\nr{w_{k+1}}$ for the case of depth $m=2$
are summarized in the following proposition. 
\begin{prop}[Depth $m=2$]\label{prop:ew-m2} 
With depth $m=2$ the estimates \eqref{eqn:gm-jeta} and \eqref{eqn:gm-ngam} reduce to
\begin{align}\label{eqn:m2-jeta}
|\alpha^j_{j-1}|\nr{e_{j-1}} &\le \f{1}{1-\kappa}\left(
|\alpha^j_{j-1} + \alpha^j_{j-2}|\nr{w_{j-1}} + |\alpha_{j-3}^j| \nr{w_{j-2}} \right)
\\ \label{eqn:m2-jmeta}
|1-\alpha^j_{j-2}|\nr{e_{j-2}} & \le \f{1}{1-\kappa}\left(
|\alpha^j_{j-1}|\nr{w_j} + |\alpha^j_{j-1}|\nr{w_{j-1}} + \nr{w_{j-2}} \right)
\\ \label{eqn:m2-jgam}
|\gamma_{j-1}|\nr{e_{j-1}} & \le \f{1}{1-\kappa}\left(
\nr{w_j} + |\alpha^j_{j-3}|\nr{w_{j-1}} + |\alpha^j_{j-3}|\nr{w_{j-2}} \right)
\\ \label{eqn:m2-j1gam}
|\gamma_{j-2}|\nr{e_{j-2}} & \le \f{1}{1-\kappa} \left(
|\alpha^j_{j-1}\nr{w_j} + |1-\alpha^j_{j-1}|\nr{w_{j-1}} \right).
\end{align}
\end{prop}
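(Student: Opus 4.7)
The plan is to establish Proposition \ref{prop:ew-m2} as a direct specialization of Lemma \ref{lem:ew-gm}, with no new analytic work required: I would substitute $m_{j-1}=2$ into the three general estimates \eqref{eqn:gm-jeta}, \eqref{eqn:gm-jmeta}, and \eqref{eqn:gm-ngam}, and then translate the partial sums $\eta_n$ and $\gamma_n$ introduced in the proof of Lemma \ref{lem:ew-gm} back into the individual coefficients $\alpha^j_{j-3},\alpha^j_{j-2},\alpha^j_{j-1}$ using the constraint $\alpha^j_{j-3}+\alpha^j_{j-2}+\alpha^j_{j-1}=1$.

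Specifically, for \eqref{eqn:m2-jeta} I apply \eqref{eqn:gm-jeta} with $m=2$: the defining partial sum reads $\eta_{j-1}=\alpha^j_{j-1}+\alpha^j_{j-2}$, and the remaining sum collapses to the single index $n=j-2$, contributing $|\alpha^j_{j-3}|\nr{w_{j-2}}$. For \eqref{eqn:m2-jmeta} I apply \eqref{eqn:gm-jmeta}: the indexed sum again collapses to a single term $|\alpha^j_{j-1}|\nr{w_j}$, while $\eta_{j-m+2}=\eta_j=\alpha^j_{j-1}$ gives the middle contribution $|\alpha^j_{j-1}|\nr{w_{j-1}}$, and the final $\nr{w_{j-m}}$ becomes $\nr{w_{j-2}}$. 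Finally, for \eqref{eqn:m2-jgam} and \eqref{eqn:m2-j1gam} I apply \eqref{eqn:gm-ngam} at $p=j$ and $p=j-1$ respectively, using $\gamma_j=1$ together with $\gamma_{j-m}=\alpha^j_{j-m-1}$ and the constraint equation to rewrite each remaining partial sum $\gamma_p$ as a sum or difference of the three $\alpha$-coefficients.

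The only obstacle I anticipate is careful index bookkeeping: with $m=2$ several of the sums appearing in Lemma \ref{lem:ew-gm} span only a single index or degenerate into empty ranges, so one must keep track of which $\alpha^j_i$'s actually appear and, when simplifying expressions such as $\gamma_{p-1}$ or $1-\gamma_p$, use the constraint in the correct form so that the coefficient on the left-hand side matches the statement of the proposition. Beyond this accounting, every Cauchy--Schwarz and triangle-inequality estimate, as well as the application of \eqref{eqn:ew001}, has already been carried out in the parent lemma, so the proof reduces to a short tabular simplification.
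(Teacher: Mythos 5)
Your proposal is correct and takes essentially the same approach as the paper, which disposes of the proposition in a single remark by specializing Lemma~\ref{lem:ew-gm} to $m=2$ via the identities $\gamma_{j-2}=\alpha^j_{j-3}$, $\gamma_{j-1}=1-\alpha^j_{j-1}$, $\gamma_j=1$ (and correspondingly $\eta_{j-1}=\alpha^j_{j-1}+\alpha^j_{j-2}$, $\eta_j=\alpha^j_{j-1}$). One small point: your specialization of \eqref{eqn:gm-jmeta} naturally yields $|1-\alpha^j_{j-3}|$ on the left-hand side of \eqref{eqn:m2-jmeta}, which is consistent with how the bound is used in the proof of Theorem~\ref{thm:m2}, so the $|1-\alpha^j_{j-2}|$ appearing in the stated proposition is a typographical slip rather than a gap in your argument.
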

The second two bounds \eqref{eqn:m2-jgam} and \eqref{eqn:m2-j1gam} follow from 
\eqref{eqn:gm-ngam} noting from \eqref{eqn:mg-gamma}, 
that for $m=2$ we have
$\gamma_{j-2} = \alpha_{j-3}^j$, $\gamma_{j-1} = 1- \alpha_{j-1}^j$ and 
$\gamma_{j} = 1$.
The approach taken in \cite{PRX18} is to reduce the right hand side of 
\eqref{eqn:m2-jmeta} and \eqref{eqn:m2-jgam} to two terms each by relating their
 expansion to that of \eqref{eqn:m2-jeta} and \eqref{eqn:m2-j1gam}, respectively. 
Here the terms are left as they are to emphasize the direct generality to greater 
depth $m$.
\subsection{Explicit computation of the optimization gain} \label{subsec:optgain}
The stage-$k$ gain $\theta_k$ has a simple description assuming the optimization is
performed over a norm $\anorm$ induced by an inner product $\forma$, 
in other words in a Hilbert space setting.

Consider the unconstrained $\gamma$-form of the optimization problem 
\eqref{eqn:mg-gamma} at iteration $k$ with depth $m$: Find 
$\gamma_{k-m+1}, \ldots, \gamma_{k}$ that minimize
\begin{align}\label{eqn:gamma-opt}
\nr{ w_{k+1} - \sum_{n = k-m+1}^{k} \gamma_{n} (w_{n+1} - w_{n})}^2 
=
\nr{ w_{k+1} - F^k \gamma^k}^2, 
\end{align}
Where $F$ is the matrix with columns $w_{n+1}-w_{n}$, $n = k-m+1, \ldots, k$ and 
$\gamma^k$ is the corresponding vector of coefficients 
$\gamma_{k-m+1}, \ldots, \gamma_{k}$.  Indeed, \eqref{eqn:gamma-opt} 
(or equivalently reindexed)
is the preferred way to state the optimization problem \cite{WaNi11}, particularly
in the case where $\anorm$ is the $l_2$ norm and a fast $QR$ algorithm can be used.

This is also the preferred statement of the problem to understand the gain $\theta_k$
from \eqref{eqn:thetak}, which satisfies $\nr{\wa_{k+1}} = \theta_k\nr{w_{k+1}}$
Define the unique decomposition $w_{k+1} = w_R + w_N$ with 
$w_R \in \ran(F^k)$ and $w_N \in \nulll((F_k)^T)$.
Then $w_N$ is the least-squares residual satisfying 
$\nr{w_N} = \nr{w_{k+1} - F^k \delta^k} = \nr{\wa_{k+1}} = \theta_k \nr{w_{k+1}}$
meaning
\begin{align}
\theta_{k} = \sqrt{1 - \f{\nr{w_R}^2}{\nr{w_{k+1}}^2}},
\end{align}
and, $\theta_k$ has the interpretation of the direction-sine between $w_{k+1}$ and
the subspace spanned by $\{w_{n+1}-w_n\}_{n  = k-m+1}^k$. 
This is particularly clear in the case $m=1$ where by solving for the critical
point $\gamma$ of \eqref{eqn:m1-gamma} yields
\[
\gamma = \f{(w_{k+1},w_{k+1}-w_k)}{\nr{w_{k+1}-w_k}^2}.
\]
Expanding $\theta_k^2 \nr{w_{k+1}}^2= \nr{w_{k+1} - \gamma({w_{k+1}-w_k})}^2$ and using
the particular value of $\gamma$ above yields
\[
1-\theta_k^2 = \f{(w_{k+1},w_{k+1}-w_k)^2}{\nr{w_{k+1}-w_k}^2\nr{w_{k+1}}^2},
\]
with the clear interpretation that $(1-\theta_k^2)^{1/2}$ 
is the direction cosine between $w_{k+1}$ and $w_{k+1}-w_k$, 
hence $\theta_k$ is the direction-sine.

If indeed an (economy) $QR$ algorithm $F^k = Q_1 R_1$ is used to solve the optimization
problem then $\theta_k = \sqrt{1 - (\nr{Q_1^Tw_{k+1}}/\nr{w_{k+1}})^2}$, which 
can be used to predict whether an accelerated step would be (sufficiently) beneficial.
This explicit computation of $\theta_k$ is used in \S \ref{subsec:qldamp} 
to propose an adaptive damping strategy based on the gain at each
step.  Finally, it is noted that the improvement in the gain $\theta_k$ as 
$m$ is increased depends on sufficient linear independence or small direction
cosines between the columns of $F^k$, as information from earlier in the history is 
added. This is discussed in some greater depth in \cite{WaNi11}.

\section{Convergence rates for depths $m=1$ and $m=2$}\label{sec:c-rates}
First we put the expansion \eqref{eqn:wk008} together with the bounds 
\eqref{eqn:ew-m1a}-\eqref{eqn:ew-m1b} for a 
convergence proof for the simplest case of $m=1$.

\begin{theorem}[Convergence of the residual with depth $m=1$]\label{thm:m1}
On satisfaction of Assumptions \ref{assume:g} and \ref{assume:contract}, 
if the coefficients $\alpha^{k+1}_k,\alpha^k_{k-1}$ remain bounded and bounded 
away from zero, 
the following bound holds for the residual $w_{k+1}$ from Algorithm
\ref{alg:anderson} with depth $m=1$:
\[
\nr{w_{k+1}} \le \theta_k((1-\beta_{k-1}) + \kappa \beta_{k-1})\nr{w_k} +
\bigo\left(\nr{w_k}^2 \right) 
+\bigo\left(\nr{w_{k-1}}^2 \right).
\]
\end{theorem}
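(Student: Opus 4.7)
The plan is to take the general expansion \eqref{eqn:wk008} and specialize it to $m=1$, where the outer sum in the quadratic remainder collapses to the single index $j=k-1$ (so the inner sum over $n$ also reduces to $n=k-1$). This leaves
\begin{equation*}
w_{k+1} = \int_0^1 \bigl[(1-\beta_{k-1})\wa_k + \beta_{k-1} g'(z_k(t);\wa_k)\bigr]\dd t
+ \int_0^1\!\!\int_0^1 g''\bigl(\hat z_{k,t}(s); z_{k-1}(t)-z_k(t),\gamma_{k-1}e_{k-1}\bigr)\dd s\dd t,
\end{equation*}
with $\gamma_{k-1}=\alpha^{k}_{k-2}$ for $m=1$. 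I would then take norms and treat the two pieces separately.

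For the linear piece, Assumption \ref{assume:g}(2) gives $\nr{g'(z_k(t);\wa_k)}\le\kappa\nr{\wa_k}$, so the integral is bounded by $((1-\beta_{k-1})+\beta_{k-1}\kappa)\nr{\wa_k}$, and then the gain relation \eqref{eqn:thetak} extracts the factor $\theta_{k-1}$ (I will follow the theorem's notation and simply call this factor $\theta_k$ as in the statement) times $\nr{w_k}$. This immediately produces the leading linear coefficient $\theta_k((1-\beta_{k-1})+\beta_{k-1}\kappa)$ in the bound.

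For the quadratic piece, Assumption \ref{assume:g}(3) gives $\nr{g''(\cdot;u,v)}\le\hat\kappa\nr{u}\nr{v}$. Substituting $z_{k-1}(t)-z_k(t)=-(1-t)e_{k-1}-t\,e_k$ and integrating in $s,t$ reduces the remainder to a constant multiple of $(\nr{e_{k-1}}+\nr{e_k})\,|\gamma_{k-1}|\nr{e_{k-1}}$. I would then invoke Lemma \ref{lem:ew-m1}: the bound \eqref{eqn:ew-m1b} gives $|\gamma_{k-1}|\nr{e_{k-1}}=|\alpha^{k}_{k-2}|\nr{e_{k-1}}\le(1-\kappa)^{-1}\nr{w_k}$, while \eqref{eqn:ew-m1a} combined with the hypothesis that $\alpha^{k}_{k-1}$ is bounded away from zero gives $\nr{e_{k-1}}\lesssim\nr{w_{k-1}}$, and the analogous estimate at level $j=k+1$ using the bounded-away-from-zero hypothesis on $\alpha^{k+1}_k$ gives $\nr{e_k}\lesssim\nr{w_k}$. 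The quadratic remainder is therefore bounded by $C\bigl(\nr{w_k}\nr{w_{k-1}}+\nr{w_k}^2\bigr)$, and a single application of Young's inequality converts the cross term into $\bigo(\nr{w_k}^2)+\bigo(\nr{w_{k-1}}^2)$, matching the stated form.

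The main obstacle I anticipate is the careful bookkeeping of the coefficients in the quadratic remainder. One has to match up the right version of Lemma \ref{lem:ew-m1} to each of $|\gamma_{k-1}|\nr{e_{k-1}}$, $\nr{e_{k-1}}$, and $\nr{e_k}$, and in particular the bound on $\nr{e_k}$ requires invoking the lemma one index later than where one sits, which is precisely why the theorem assumes both $\alpha^{k+1}_k$ and $\alpha^{k}_{k-1}$ are bounded and bounded away from zero. Everything else is a routine unwinding of the expansion and inequalities, but this coefficient tracking is where one must be careful to land exactly on the two $\bigo(\nr{w_k}^2)$ and $\bigo(\nr{w_{k-1}}^2)$ terms rather than picking up extra residuals from earlier in the history.
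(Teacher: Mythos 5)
Your proposal is correct and follows essentially the same route as the paper's proof: specialize \eqref{eqn:wk008} to $m=1$, bound the linear term via Assumption \ref{assume:g}(2) and the gain relation \eqref{eqn:thetak}, and control the quadratic remainder by applying \eqref{eqn:ew-m1b} at $j=k$ to $|\gamma_{k-1}|\nr{e_{k-1}}$ and \eqref{eqn:ew-m1a} at $j=k+1$ and $j=k$ to $\nr{e_k}$ and $\nr{e_{k-1}}$, exactly as in \eqref{eqn:m1002}--\eqref{eqn:m1003}. The only cosmetic difference is your explicit appeal to Young's inequality for the cross term $\nr{w_k}\nr{w_{k-1}}$, which the paper absorbs directly into the $\bigo$ notation.
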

\begin{remark}
The assumptions on the coefficients $\alpha_j^k$ arising from the optimization problem are 
similar to those of \cite{ToKe15}.
These assumptions could be eliminated by solving instead a constrained optimization 
problem that enforces boundedness of the parameters, 
resulting in a modified gain $\hat \theta_k$ which satisfies
$\theta_k \le \hat \theta_k \le 1$.
\end{remark}
\begin{proof}
In this case the expansion found for $w_{k+1}$ in \eqref{eqn:wk008} reduces to
\begin{align}\label{eqn:m1001}
w_{k+1} &= \int_0^1 (1-\beta_{k-1})\wa_k + \beta_{k-1} g'(z_k(t); \wa_k)  \dd t
\nonumber \\
&+  \int_0^1 \int_0^1g''(\hat z_{k,t}(s);z_{k-1}(t) - z_{k}(t), \gamma_{k-1} e_{k-1}) 
\dd s \dd t.
\end{align}
Taking norms of both sides and applying Assumption \ref{assume:g}, 
\eqref{eqn:thetak} and the triangle inequality,
\begin{align}\label{eqn:m1002}
\nr{w_{k+1}} \le \theta_k ((1-\beta_{k-1}) + \kappa\beta_{k-1}) \nr{w_k} 
+ \hat \kappa (\nr{e_k} + \nr{e_{k-1}})\gamma_{k-1}\nr{e_{k-1}}.
\end{align}
The preceding bound \eqref{eqn:m1002} holds regardless of whether $g$ is globally
contractive (Assumption \ref{assume:contract}), 
hence for error terms $\nr{e_k}$ and $\nr{e_{k-1}}$ small 
enough, contraction of the error may be observed depending on the search direction,
particularly if a damping factor $0 < \beta < 1$ is 
applied, and if the gain $\theta_k$ is sufficiently less than one.  This justifies
the observation that Anderson acceleration can enlarge the effective domain of 
convergence of a fixed point iteration.

For the remainder of the calculation, we consider the case of a contractive operator, 
meaning Assumption \ref{assume:contract} is satisfied. Applying \eqref{eqn:ew-m1b} 
with $j = k$ to the $\gamma_{k-1}\nr{e_{k-1}}$, recalling by \eqref{eqn:gamma} we have 
$\gamma_{k-1} = \alpha^k_{k-2}$; and, applying \eqref{eqn:ew-m1a} with $j = k+1$ 
and $j=k$ respectively to the remaining $\nr{e_k}$ and $\nr{e_{k-1}}$ allows
\begin{align}\label{eqn:m1003}
\nr{w_{k+1}} &\le \theta_k ((1-\beta_{k-1}) + \kappa \beta_{k-1}) \nr{w_k} 
+ \f{\hat \kappa}{(1-\kappa)^2}\left(\f{\nr{w_k}}{\alpha^{k+1}_k} 
+ \f{\nr{w_{k-1}}}{\alpha^k_{k-1}}\right)\nr{w_k}
\nonumber \\
& =  \theta_k ((1-\beta_{k-1}) + \kappa\beta_{k-1}) \nr{w_k} 
+ \bigo \left( \nr{w_k}^2 \right)
+ \bigo \left( \nr{w_{k-1}}^2 \right).
\end{align}
\end{proof}
As discussed in \S \ref{subsec:ew}, $\alpha_k^{k+1}$ and $\alpha_{k-1}^k$ are
each the leading coefficients in their respective optimization problems, multiplying
the most recent iterate.  As such, these coefficients may be reasonably considered
bounded away from zero.

The case of $m=2$ follows similarly, combining \eqref{eqn:wk008} with 
\eqref{eqn:m2-jeta}-\eqref{eqn:m2-j1gam}.
\begin{theorem}[Convergence of the residual with depth $m=2$]\label{thm:m2}
On satisfaction of Assumptions \ref{assume:g} and \ref{assume:contract}, 
if the coefficients $\alpha^k_{k-3}, \dots, \alpha^{k}_{k-1}$ remain bounded, 
and $\alpha^k_{k-1}$ and $1-\alpha^k_{k-3}$ remain bounded away from zero, 
the following bound holds for the residual $w_{k+1}$ from Algorithm
\ref{alg:anderson} with depth $m=2$.
\[
\nr{w_{k+1}} \le \theta_k((1-\beta_{k-1}) + \kappa \beta_{k-1})\nr{w_k} +
\bigo\left(\nr{w_k}^2 \right) 
+\bigo\left(\nr{w_{k-1}}^2 \right) 
+\bigo\left(\nr{w_{k-2}}^2 \right).
\]
\end{theorem}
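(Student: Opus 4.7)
The plan is to mirror the proof of Theorem \ref{thm:m1} with the bookkeeping pushed one level deeper. First I would specialize \eqref{eqn:wk008} to $m_{k-1}=2$: the outer index $j$ runs over $\{k-2,k-1\}$ and the inner index $n$ runs over $\{j,\dots,k-1\}$, so the second-order contribution collapses to exactly three $g''$ summands, indexed by $(j,n)=(k-1,k-1)$, $(k-2,k-2)$, and $(k-2,k-1)$. The linear part of the expansion is identical to that in the $m=1$ case.

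Next, take norms. The linear part is handled exactly as in Theorem \ref{thm:m1}: applying item 2 of Assumption \ref{assume:g} pointwise inside the integral and invoking the gain definition \eqref{eqn:thetak} yields the lead term $\theta_k((1-\beta_{k-1})+\kappa\beta_{k-1})\nr{w_k}$. For each of the three $g''$ summands, item 3 of Assumption \ref{assume:g} together with the identity $z_{n+1}(t)-z_n(t)=t\,e_{n+1}+(1-t)e_n$ produces a bound of the form $\hat\kappa(\nr{e_n}+\nr{e_{n+1}})|\gamma_j|\nr{e_j}$, so the quadratic contribution consists of three products of shape (a sum of two consecutive $\nr{e_n}$'s) $\times$ (a single $|\gamma_j|\nr{e_j}$).

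I then invoke Proposition \ref{prop:ew-m2} to convert each such factor into a linear combination of $\nr{w_{k-2}},\nr{w_{k-1}},\nr{w_k}$. The $\gamma$-weighted factors $|\gamma_{k-1}|\nr{e_{k-1}}$ and $|\gamma_{k-2}|\nr{e_{k-2}}$ are bounded directly at level $j=k$ by \eqref{eqn:m2-jgam} and \eqref{eqn:m2-j1gam}. The stray factors $\nr{e_k}$, $\nr{e_{k-1}}$, $\nr{e_{k-2}}$ are bounded using \eqref{eqn:m2-jeta} at $j=k+1$, \eqref{eqn:m2-jeta} at $j=k$, and \eqref{eqn:m2-jmeta} at $j=k$, after dividing out the leading coefficients $|\alpha^{k+1}_k|$, $|\alpha^k_{k-1}|$, and $|1-\alpha^k_{k-3}|$ respectively; the hypothesis that these three coefficients are bounded away from zero is precisely what keeps the divisions uniform in $k$. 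Every quadratic term then takes the form $\nr{w_a}\nr{w_b}$ for some $a,b\in\{k-2,k-1,k\}$, so Young's inequality $\nr{w_a}\nr{w_b}\le\tfrac12(\nr{w_a}^2+\nr{w_b}^2)$ collapses the quadratic contribution into the claimed $\bigo(\nr{w_k}^2)+\bigo(\nr{w_{k-1}}^2)+\bigo(\nr{w_{k-2}}^2)$, with the constants $\hat\kappa/(1-\kappa)^2$ and the bounded $\alpha$'s absorbed into the implicit $\bigo$.

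The real obstacle here is bookkeeping rather than estimation: tracking three $g''$ terms and their six $\nr{e}$ factors, and verifying that only the three leading coefficients identified above must stay bounded away from zero while the remaining $\alpha^k_{k-3},\dots,\alpha^k_{k-1}$ need only stay bounded. Once those are pinned down, the chain Assumption \ref{assume:g} $\Rightarrow$ Proposition \ref{prop:ew-m2} $\Rightarrow$ Young runs without obstruction. Since none of the Proposition \ref{prop:ew-m2} bounds invoked reach back past $w_{k-2}$, the estimate closes at depth two, exactly as the theorem asserts, and the structure of the argument extrapolates transparently to general depth $m$.
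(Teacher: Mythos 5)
Your proposal is correct and follows essentially the same route as the paper's proof: the same specialization of \eqref{eqn:wk008} to the three $g''$ summands indexed by $(j,n)\in\{(k-1,k-1),(k-2,k-2),(k-2,k-1)\}$, the same application of \eqref{eqn:m2-jgam}--\eqref{eqn:m2-j1gam} to the $\gamma$-weighted factors and of \eqref{eqn:m2-jeta} (at $j=k+1$ and $j=k$) and \eqref{eqn:m2-jmeta} to the stray $\nr{e_j}$'s, with the same three coefficients $\alpha^{k+1}_k$, $\alpha^k_{k-1}$, $1-\alpha^k_{k-3}$ divided out. The only cosmetic difference is that you invoke Young's inequality explicitly to turn the cross terms $\nr{w_a}\nr{w_b}$ into squares, a step the paper leaves implicit in its $\bigo$ notation.
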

\begin{proof}
For depth $m=2$ the residual expansion \eqref{eqn:wk008} reduces to
\begin{align*}
w_{k+1} &= \int_0^1 (1-\beta_{k-1})\wa_k + \beta_{k-1} g'(z_k(t); \wa_k)  \dd t
\nonumber \\
& + \int_0^1 \int_0^1 
  g''(\hat z_{k,t}(s);z_{k-1}(t) - z_{k}(t), \gamma_{k-1} e_{k-1}) \dd s \dd t.
\nonumber\\
& + \int_0^1 \int_0^1 
  g''(\hat z_{k-1,t}(s);z_{k-2}(t) - z_{k-1}(t), \gamma_{k-2} e_{k-2}) \dd s \dd t.
\nonumber \\
& + \int_0^1 \int_0^1 
  g''(\hat z_{k,t}(s);z_{k-1}(t) - z_{k}(t), \gamma_{k-2} e_{k-2}) \dd s \dd t.
\end{align*}
Taking norms of both sides and applying \eqref{eqn:thetak} and the triangle inequality,
\begin{align}\label{eqn:m2002}
\nr{w_{k+1}}& \le \theta_k((1-\beta_{k-1}) + \kappa \beta_{k-1})\nr{w_k} +
\hat \kappa(\nr{e_k} + \nr{e_{k-1}})|\gamma_{k-1}|\nr{e_{k-1}} .
\nonumber \\
& + \hat \kappa (\nr{e_{k-2}} + 2\nr{e_{k-1}} + \nr{e_{k}})|\gamma_{k-2}|\nr{e_{k-2}}.
\end{align}
Applying \eqref{eqn:m2-jgam} and \eqref{eqn:m2-j1gam} to \eqref{eqn:m2002} yields 
\begin{align}\label{eqn:m2002a}
\nr{w_{k+1}}& \le \theta_k((1-\beta_{k-1}) + \kappa \beta_{k-1})\nr{w_k} 
\nonumber \\
&+\f{\hat \kappa}{1-\kappa}(\nr{e_k} + \nr{e_{k-1}})
\left(\nr{w_k} + |\alpha^k_{k-3}|\nr{w_{k-1}} + |\alpha^k_{k-3}|\nr{w_{k-2}} \right)
\nonumber \\
& + \f{\hat \kappa}{1-\kappa} 
(\nr{e_{k}} + 2\nr{e_{k-1}} + \nr{e_{k-2}})
\left(|\alpha^k_{k-1}\nr{w_k} + |1-\alpha^k_{k-1}|\nr{w_{k-1}} \right).
\end{align}
Applying \eqref{eqn:m2-jeta} with $j=k+1$ and $j=k$ together with
\eqref{eqn:m2-jmeta} to \eqref{eqn:m2002a} then yields
\begin{align}\label{eqn:m2003}
\nr{w_{k+1}}& \le \theta_k((1-\beta_{k-1}) + \kappa \beta_{k-1})\nr{w_k} 
\nonumber \\
&+ \f{\hat \kappa}{(1-\kappa)^2}\Bigg(\f{1}{\alpha^{k+1}_k}\left(
|\alpha^{k+1}_k + \alpha^{k+1}_{k-1}|\nr{w_k} + |\alpha^{k+1}_{k-2}|\nr{w_{k-1}}\right)
\nonumber \\
&+\f{1}{\alpha^{k}_{k-1}}\left(
|\alpha^{k}_{k-1} + \alpha^{k}_{k-2}|\nr{w_{k-1}} + |\alpha^{k}_{k-3}|\nr{w_{k-2}}\right)
\Bigg) 
\nonumber \\
& \times
\left(\nr{w_k} + |\alpha^k_{k-3}|\nr{w_{k-1}} + |\alpha^k_{k-3}|\nr{w_{k-2}} \right)
\nonumber \\
& + \f{\hat \kappa}{(1-\kappa)^2}\Bigg(
 \f{1}{\alpha^{k+1}_k}\left(
|\alpha^{k+1}_k + \alpha^{k+1}_{k-1}|\nr{w_k} + |\alpha^{k+1}_{k-2}|\nr{w_{k-1}}\right)
\nonumber \\
&+\f{1}{\alpha^{k}_{k-1}}\left(
|\alpha^{k}_{k-1} + \alpha^{k}_{k-2}|\nr{w_{k-1}} + |\alpha^{k}_{k-3}|\nr{w_{k-2}}\right)
\nonumber \\
& +\f{1}{1-\alpha^k_{k-3}}
\left(|\alpha^k_{k-1}|\nr{w_k} + |\alpha^k_{k-1}|\nr{w_{k-1}} + \nr{w_{k-2}} \right)
\Bigg) 
\nonumber \\
& \times
\left(|\alpha^k_{k-1}\nr{w_k} + |1-\alpha^k_{k-1}|\nr{w_{k-1}} \right).
\end{align}
And, \eqref{eqn:m2003} satisfies
\begin{align}\label{eqn:m2004}
\nr{w_{k+1}}& \le \theta_k((1-\beta_{k-1}) + \kappa \beta_{k-1})\nr{w_k} +
\bigo\left(\nr{w_k}^2 \right) 
+\bigo\left(\nr{w_{k-1}}^2 \right) 
+\bigo\left(\nr{w_{k-2}}^2 \right),
\end{align}
where the higher order terms have bounded coefficients.
\end{proof}
\begin{remark}
To avoid the extra assumption that $|1-\alpha^k_{k-3}|$ remains bounded away from 
zero, the term $\nr{e_{k-2}}$ of \eqref{eqn:m2002} could be bounded instead by 
\eqref{eqn:gm-ngam} with $j = k-1$, by which \eqref{eqn:m2004} is replaced by 
\begin{align*}
\nr{w_{k+1}}& \le \theta_k((1-\beta_{k-1}) + \kappa \beta_{k-1})\nr{w_k} 
\nonumber \\
&+\bigo\left(\nr{w_k}^2 \right) 
+\bigo\left(\nr{w_{k-1}}^2 \right) 
+\bigo\left(\nr{w_{k-2}}^2 \right)
+\bigo\left(\nr{w_{k-3}}^2 \right).
\end{align*}
Moreover this generalizes to higher order.
\end{remark}
Finally, we state without proof the general result which can be extrapolated from 
\eqref{eqn:wk008} and \eqref{eqn:gm-jeta}-\eqref{eqn:gm-ngam} as was done explicitly
for depth $m=2$, above.
\begin{prop}\label{prop:gm}
On satisfaction of Assumptions \ref{assume:g} and \ref{assume:contract}, 
if the coefficients $\alpha^k_{k-m-1}, \dots, \alpha^{k}_{k-1}$ remain bounded, 
and $\alpha^k_{k-1}$ and $1-\alpha^k_{k-m-1}$ remain bounded away from zero, 
the following bound holds for the residual $w_{k+1}$ from Algorithm
\ref{alg:anderson} with depth $m$.
\[
\nr{w_{k+1}} \le \theta_k((1-\beta_{k-1}) + \kappa \beta_{k-1})\nr{w_k} +
\sum_{j = 0}^{m}\bigo\left(\nr{w_{k-j}}^2 \right). 
\]
\end{prop}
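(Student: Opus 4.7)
The plan is to follow exactly the pattern of the proofs of Theorems \ref{thm:m1} and \ref{thm:m2}, with the bookkeeping generalized to depth $m$. The starting point is the expansion \eqref{eqn:wk008} of $w_{k+1}$ derived in Section \ref{subsec:resi-expansion}, whose validity requires only Assumption \ref{assume:g} and does not use contractivity. Taking norms on both sides, bounding the first integrand by $\nr{g'(z_k(t);\wa_k)} \le \kappa\nr{\wa_k}$ from Assumption \ref{assume:g}, and invoking the defining identity \eqref{eqn:thetak}, the linear piece collapses by the triangle inequality to the advertised leading term $\theta_k((1-\beta_{k-1})+\kappa\beta_{k-1})\nr{w_k}$.

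The remaining higher-order piece is a double sum of $g''$ terms indexed over $j \in \{k-m_{k-1},\ldots,k-1\}$ and $n \in \{j,\ldots,k-1\}$. Applying $\nr{g''(y;u,v)} \le \hat\kappa\nr{u}\nr{v}$ from Assumption \ref{assume:g} together with the elementary pointwise estimate $\nr{z_n(t)-z_{n+1}(t)} \le \nr{e_n}+\nr{e_{n+1}}$ bounds each summand in norm by a constant multiple of $(\nr{e_n}+\nr{e_{n+1}})\,|\gamma_j|\nr{e_j}$. This exactly mirrors the reduction used in the $m=2$ case and leaves a sum whose factors are either products $|\gamma_j|\nr{e_j}$ or raw differences $\nr{e_l}$, with $j$ and $l$ in a history window of width $O(m)$ around $k$.

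The conversion from $e$-quantities to $w$-residuals is then carried out term by term using Lemma \ref{lem:ew-gm}. The leading factors $\nr{e_l}$ for $l \in \{k-m_{k-1}+1,\ldots,k\}$ are controlled by \eqref{eqn:gm-jeta} applied at iteration $l+1$, where division by $|\alpha^{l+1}_l|$ is legitimized by the hypothesis that the coefficient of the latest search direction remains bounded away from zero. The earliest leading factor $\nr{e_{k-m_{k-1}}}$ is bounded instead by \eqref{eqn:gm-jmeta}, whose left-hand coefficient $|1-\alpha^k_{k-m_{k-1}-1}|$ is bounded away from zero by the second nondegeneracy hypothesis. The products $|\gamma_j|\nr{e_j}$ are handled directly by \eqref{eqn:gm-ngam}, which already carries $|\gamma|$ on its left-hand side and only requires boundedness of the remaining $|\gamma_{p-2}|$, $|\gamma_p|$, and $|\alpha^k_n|$ coefficients that appear on the right. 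Collecting factors, every surviving product has the form $\nr{w_{l_1}}\nr{w_{l_2}}$ with $l_1,l_2$ in the history window; applying $ab \le \tfrac12(a^2+b^2)$ packages these into $\sum_{j=0}^{m}\bigo(\nr{w_{k-j}}^2)$, as claimed.

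The principal obstacle is not analytical but combinatorial: one must verify that for every index pair $(j,n)$ arising in the double sum of \eqref{eqn:wk008}, the correct variant of Lemma \ref{lem:ew-gm} applies, and that the $\alpha$-coefficients landing in denominators are precisely those declared bounded away from zero in the hypothesis. A secondary and related concern is that naively iterating \eqref{eqn:gm-jeta} all the way back to the earliest index could push the window of $w$-residuals further than $k-m$; this is the reason one must apply \eqref{eqn:gm-jmeta} rather than continue with \eqref{eqn:gm-jeta} at the earliest index $k-m_{k-1}$, keeping the effective history window consistent with the stated sum $\sum_{j=0}^{m}$. Once this indexing is settled, the argument reduces, as in Theorems \ref{thm:m1} and \ref{thm:m2}, to a finite sequence of Cauchy-Schwarz and triangle-inequality estimates with bounded coefficients.
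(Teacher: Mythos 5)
The paper states Proposition \ref{prop:gm} explicitly without proof (``we state without proof the general result which can be extrapolated\ldots''), so there is no argument of the paper's to compare against line by line; your proposal is the natural extrapolation of the $m=1$ and $m=2$ proofs that the paper intends, and its overall structure --- expansion \eqref{eqn:wk008}, collapse of the linear piece via Assumption \ref{assume:g} and \eqref{eqn:thetak}, conversion of the quadratic $e$-terms to $w$-terms via Lemma \ref{lem:ew-gm}, and Young's inequality at the end --- is the right one.

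There is, however, one concrete bookkeeping gap for $m\ge 3$, and it is precisely the issue you flag but then resolve only at a single index. You bound the bare factors $\nr{e_l}$, $l\in\{k-m+1,\dots,k\}$, by \eqref{eqn:gm-jeta} applied at iteration $l+1$. That estimate expresses $\nr{e_l}$ in terms of residuals $\nr{w_n}$ with $n$ running down to roughly $l+1-m_l$; for $l=k$ and $l=k-1$ this stays inside $\{k-m,\dots,k\}$, but for $k-m+1\le l\le k-2$ it reaches down to about $k-2m+2$, so the plan as written yields $\sum_{j=0}^{2m-2}\bigo(\nr{w_{k-j}}^2)$ rather than the stated $\sum_{j=0}^{m}\bigo(\nr{w_{k-j}}^2)$. (This defect is invisible at $m=1,2$, which is why the two proved theorems do not reveal it.) The fix is to bound every intermediate bare factor using the level-$k$ optimization problem only: since $\gamma_{p-1}+\eta_{p}=1$ by the constraint, write $\nr{e_{p-1}}\le|\gamma_{p-1}|\nr{e_{p-1}}+|\eta_{p}|\nr{e_{p-1}}$, bound the first summand by \eqref{eqn:gm-ngam} and the second by the critical-point identity for the \emph{interior} coefficient $\eta_{p}$ in \eqref{eqn:mg-eta} with $j=k$ --- the same computation that produces \eqref{eqn:gm-jmeta}, carried out at an interior index rather than the earliest one. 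Both resulting bounds involve only $\nr{w_{k-m}},\dots,\nr{w_k}$ and need no nondegeneracy hypothesis beyond boundedness of the coefficients. Alternatively one may simply accept the wider window, as the paper's own remark following Theorem \ref{thm:m2} does, but then the proposition as literally stated is not what has been proved. Apart from this, your assignment of \eqref{eqn:gm-jmeta} to the earliest index, of \eqref{eqn:gm-ngam} to the $\gamma$-weighted factors, and your identification of where the two nondegeneracy hypotheses enter are all correct, as is your (paper-consistent) indexing of the gain $\theta_k$ in the leading term.
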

As discussed in \S \ref{subsec:optgain}, even as the higher order terms accumulate, there is
still an advantage to some extent to considering greater depth $m$, due to the 
improved gain from the optimization problem.
However, in practice this must be weighed against
the computational cost of raising $m$ (which can become significant) 
and the accuracy of one's optimization solver.  
In our tests, little improvement is found past $m=3$.

\section{Numerical tests}\label{sec:numerics}

We now give results of several numerical tests that illustrate the theory above.  In particular, we illustrate that Anderson speeds up  linear convergence, slows down quadratic convergence, and increases the radius of 
convergence in agreement with the presented theory.  It is not our purpose in this
section to show how well Anderson acceleration works on a wide variety of problems;
for this, see the references in the introduction.  

\subsection{Simple illustrative tests for the scalar case}\label{subsec:scalar-num}

We start with results of some simple tests for scalar problems, 
which illustrate the theory above.  For scalar fixed point iterations, 
it only makes sense to consider Anderson for $m=1$, 
since one can solve explicitly for the optimization parameter that makes 
the objective function zero, hence $\theta_k=0$ at each step.  
We take $\beta_k=1$ in each of these 1D tests.  
We remark that for the 1D case with $m=1$, Anderson acceleration of the fixed point problem with $g(x)$ is equivalent to the secant method applied to $f(x)=g(x)-x$ (this follows from \cite{FaSa09} but could also be easily shown by writing out the methods), but still feel it is instructive to show these simple tests.

The fixed point iterations we consider are:
\begin{align*}
FPP_1: \ \ \ x_{k+1} = g_1(x_k) & = 1 + \frac{2}{x_k}, & x_0=2.1, \\
FPP_2: \ \ \ x_{k+1}  = g_2(x_k) & = x_k - \frac{\cos(x_k) - \sin(x_k)}{-\sin(x_k) - \cos(x_k)}, & x_0=1, \\
FPP_3: \ \ \ x_{k+1}  = g_3(x_k)  &= x_k^2 - 2, & x_0=4.
\end{align*}

Results from these iterations, with ($m=1$) and without ($m=0$) Anderson acceleration are shown in Figure \ref{tests1Dc}.  For $FPP_1$ with $m=0$ we expect and observe linear convergence with a rate of $|g'(2)|=0.5$ to $x^*=2$, but with $m=1$ the convergence becomes superlinear.  Since $\theta_k=0$, our theory shows that error then depends only on quadratic terms, which is consistent with these results.

$FPP_2$ is the Newton iteration for finding the zero of $f(x)=\cos(x)-\sin(x)$, and the fixed point the method converges to is $x^*=\frac{\pi}{4}$.  Since here the $m=0$ test is Newton's method with a smooth $g$ and good initial guess, the convergence is expected and observed to be quadratic.  With $m=1$, we see convergence is slightly worse, which agrees with the theory above: Anderson acceleration adds additional quadratic terms to the residual, which are significant in a quadratically converging iteration.

Lastly in 1D, we consider $FPP_3$, which for $m=0$ is not expected to converge to $x^*=2$ when $x_0>2$ since $g_3$ is not contractive near the fixed point ($g'(2)=4$).  As expected, with $m=0$, the iteration grows exponentially and by iteration 4 has reached a value of $10^{10}$.  However, with $m=1$ the convergence radius is increased (from 0) to be large enough that the iteration converges even with $x_0=4$.

\begin{figure}[!ht]
	\centering
	\includegraphics[width = .48\textwidth,height=.25\textwidth]{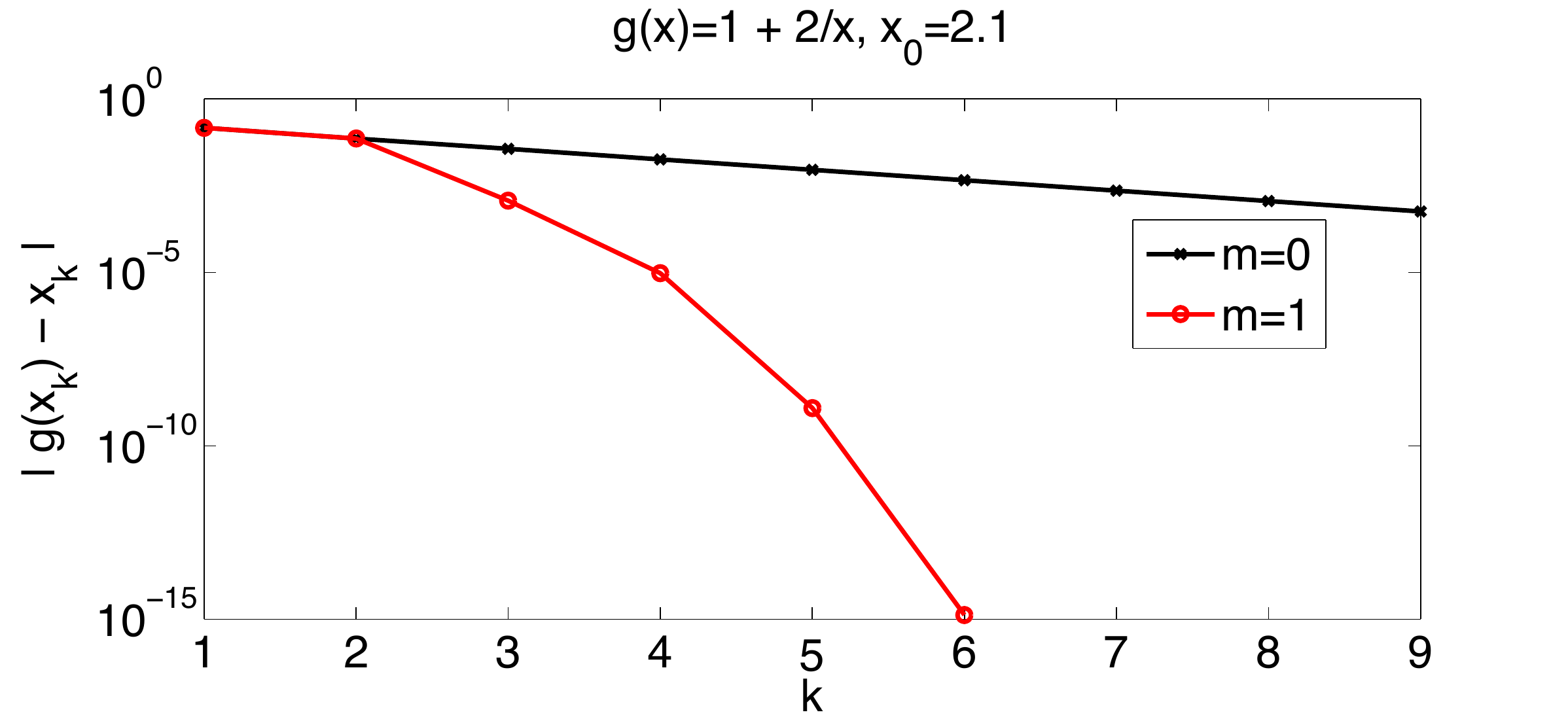}	
	\includegraphics[width = .48\textwidth,height=.25\textwidth]{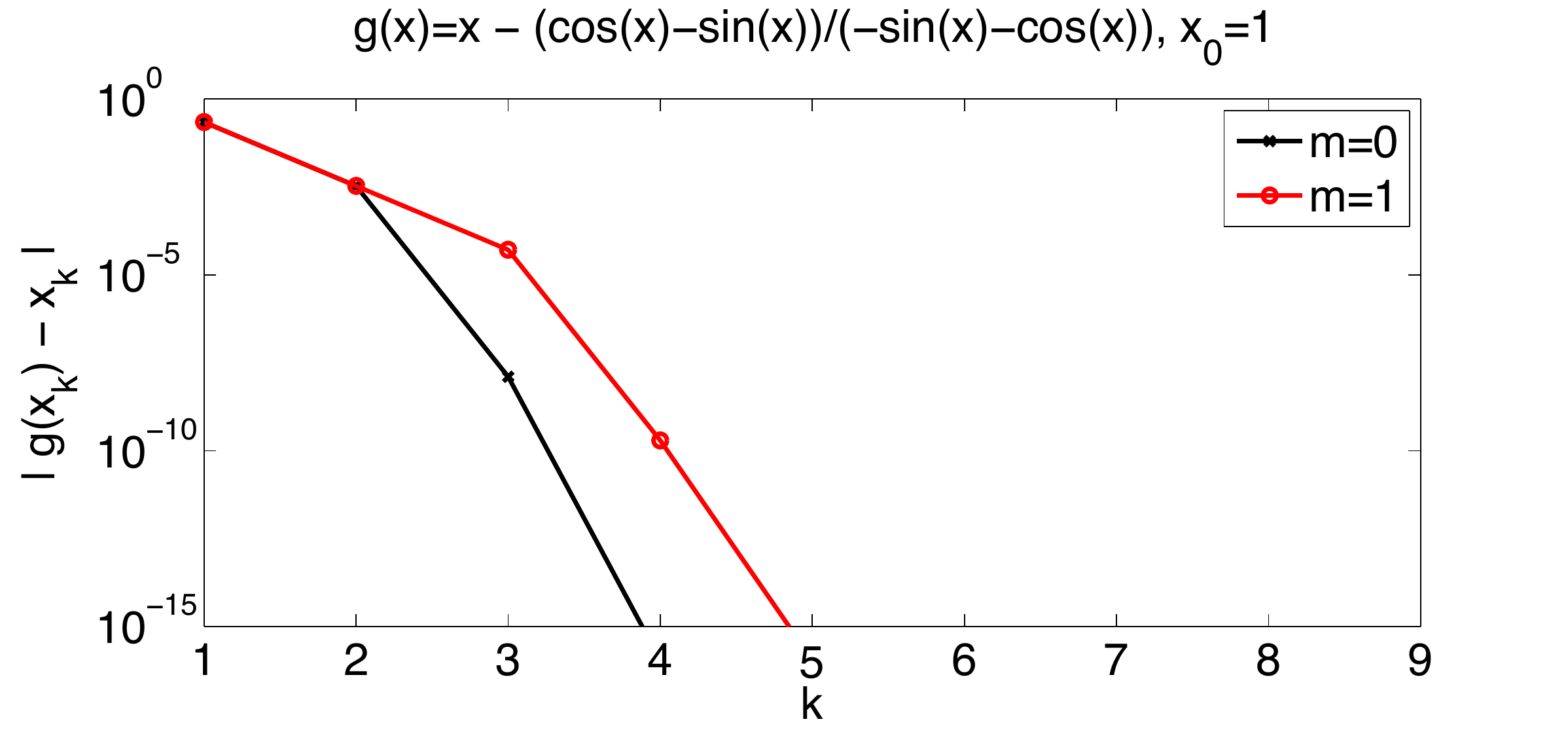}	\\
	\includegraphics[width = .48\textwidth,height=.25\textwidth]{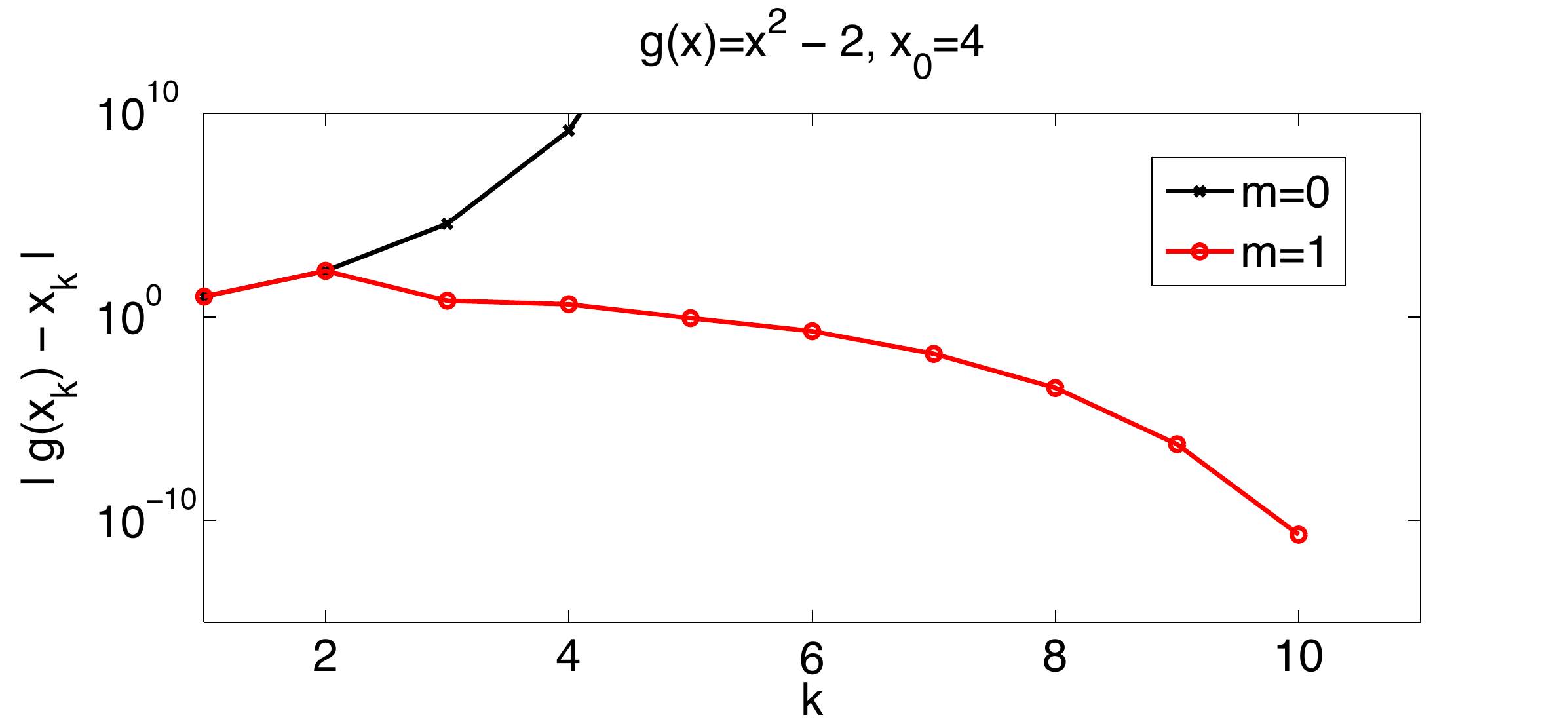}	
	\caption{\label{tests1Dc} Shown above are the residuals for three scalar fixed point iterations, with and without Anderson acceleration.}
\end{figure}

\subsection{Numerical tests for steady incompressible Navier-Stokes equation}
\label{subsec:nse}
Here we present numerical experiments to show the improved convergence provided by  
Anderson acceleration for solving the steady incompressible Navier-Stokes equations (NSE), which 
are given in a domain $\Omega$ by
\begin{eqnarray}
u \cdot\nabla u + \nabla p - \nu\Delta u  & = & f, \label{ns1} \\
\nabla \cdot u & = & 0, \label{ns2} 
\end{eqnarray}
where $\nu$ is the kinematic viscosity, $f$ is a forcing, $u$ and $p$ represent velocity and pressure, and the system
must be equipped with appropriate boundary conditions.  The $L^2(\Omega)$ norm and inner product will be denoted by $\|\cdot\|$ 
and $\forma$ in this subsection.

The tests we consider are for the 2D lid-driven cavity problem, which uses a domain $\Omega=(0,1)^2$, no slip ($u=0$) boundary conditions on the sides and bottom, and a `moving lid' on top which is implemented by the Dirichlet boundary condition $u(x,1)=\langle 1,0 \rangle^T$.  There is no forcing ($f=0$), and the kinematic viscosity is set to be $\nu \coloneqq Re^{-1}$, where $Re$ is the Reynolds number, and in our tests we use $Re$ varying between $1000$ and $10,000$.  Plots of the velocity streamlines for the steady NSE at $Re=2500$ and $6000$  are shown Figure \ref{2DCav2s}.

\begin{figure}[H]
\begin{center}
\ \ \ \ $Re$=2500 \hspace{1.4in} $Re$=6000  \\
\includegraphics[width = .32\textwidth, height=.28\textwidth,viewport=50 0 470 395, clip]{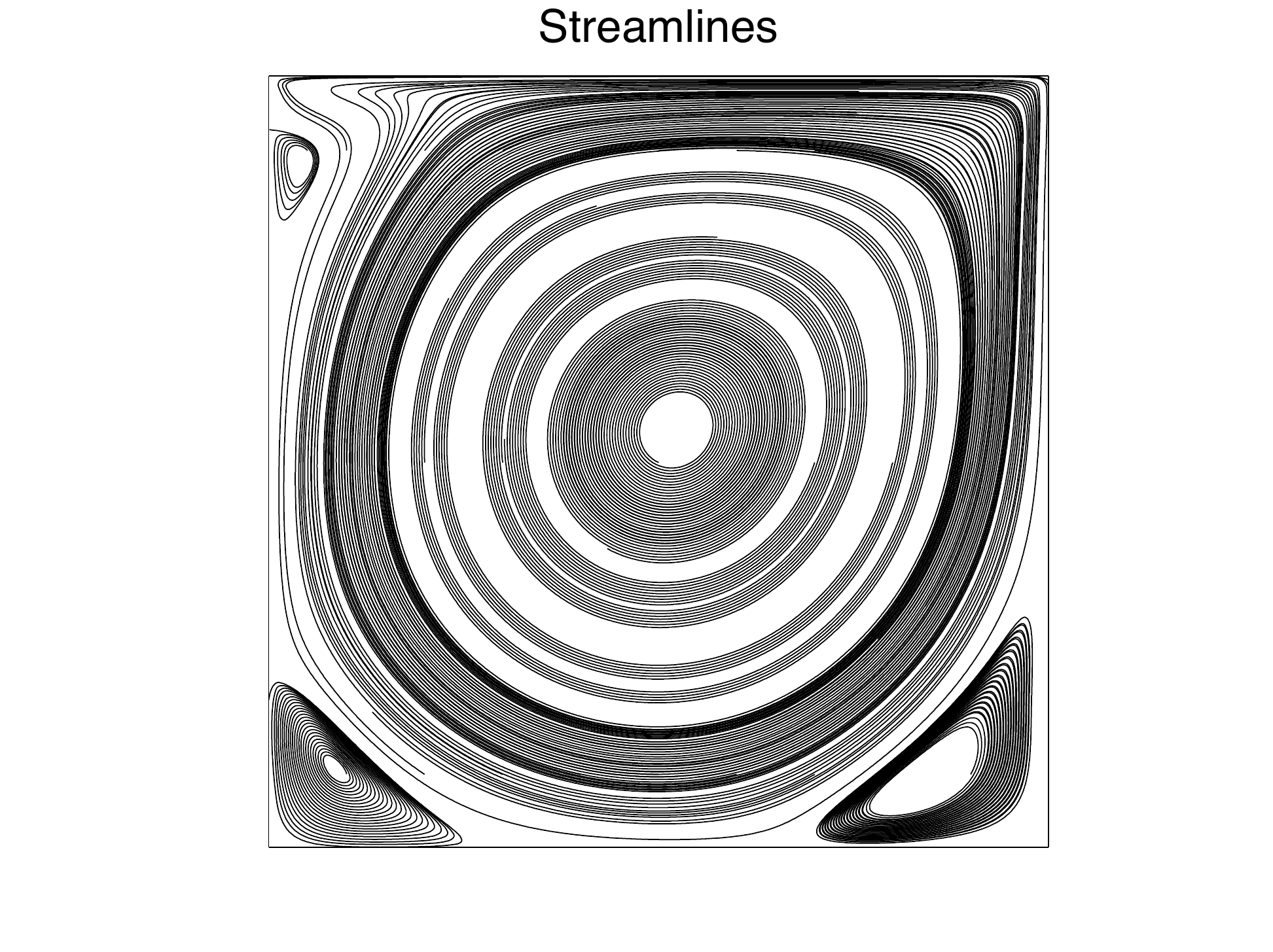}
\includegraphics[width = .32\textwidth, height=.28\textwidth,viewport=50 0 470 395, clip]{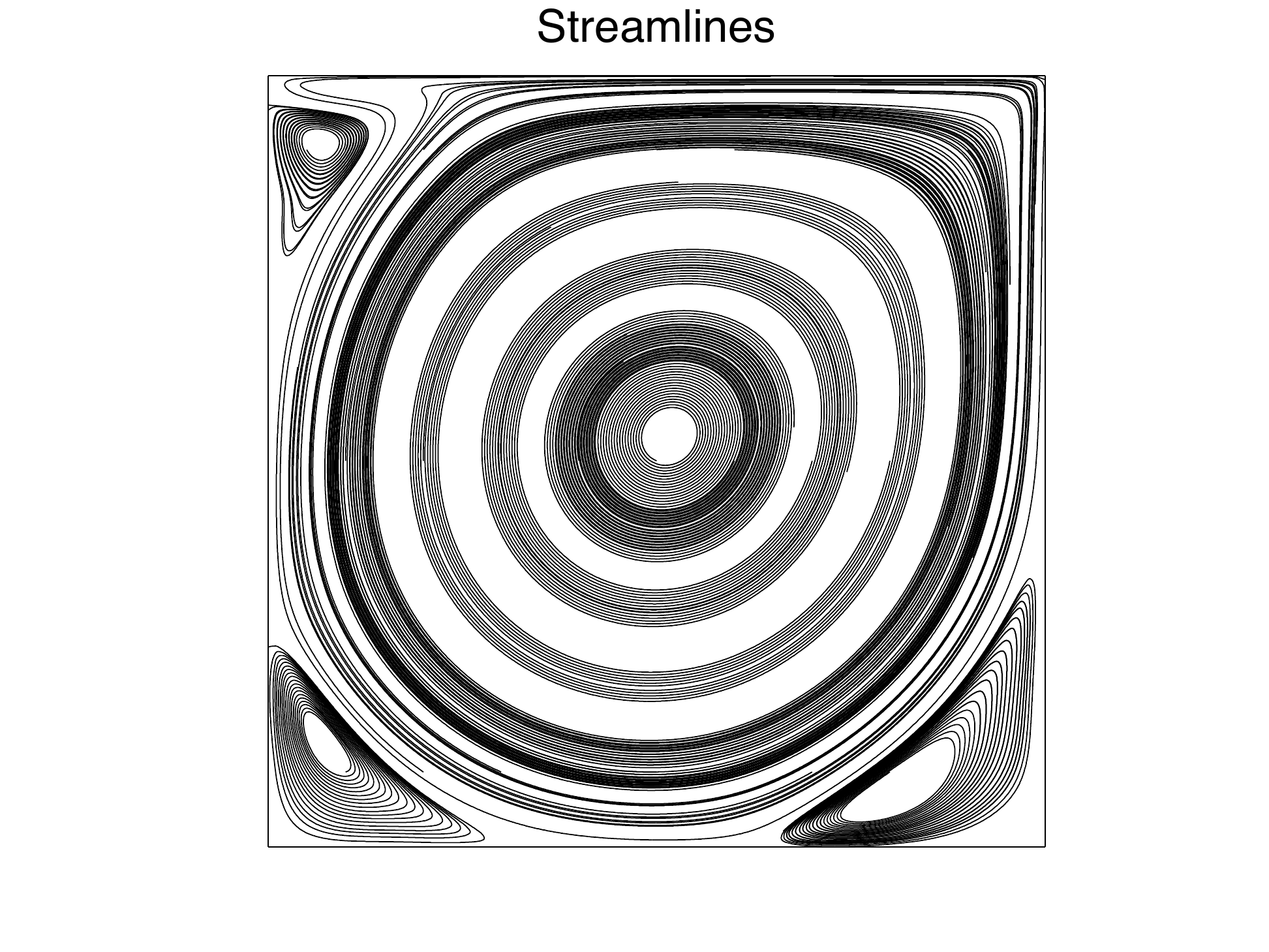}
	\caption{\label{2DCav2s} Streamline plots of the steady NSE driven cavity solutions with varying $Re$.}
	\end{center}
\end{figure}

We discretize with $(X_h,Q_h)=((P_2)^2,P_1)$ Taylor-Hood finite elements on a $\frac{1}{256}$ uniform triangular mesh that provides 592,387 total degrees of freedom, and for the initial guess we used $u_h^0=0$ but satisfying the boundary conditions. 
Define the trilinear form $b^*$ by
\[
b^*(u,v,w):= (u\cdot\nabla v,w) + \frac12 ((\nabla \cdot u)v,w).
\]  

The discrete steady incompressible NSE problem (with skew-symmetrized nonlinear term) reads as follows:  
Find $(u,p)\in (X_h,Q_h)$ satisfying for all $(v,q)\in (X_h,Q_h)$,
\begin{eqnarray}
- (p,\nabla \cdot v) + \nu(\nabla u,\nabla v) +b^*(u,u,v)
& = & (f,v) , \label{eqn:dnse1} \\
(\nabla \cdot u,q) & = & 0. \label{eqn:dnse2}
\end{eqnarray}
Since this problem is nonlinear, we need a nonlinear solver.  
We consider two common nonlinear iterations, Picard and Newton, which are defined as follows.

\begin{alg}[Picard iteration for steady NSE]\label{alg:usualPicard} \ \\
Step 1: Choose $u_0\in X_h.$\\
Step $k$: Find $(u_k,p_k)\in (X_h,Q_h)$ satisfying for all $(v,q)\in (X_h,Q_h)$,
\begin{eqnarray} \label{eqn:pnse1}
b^*(u_{k-1},u_k,v) - (p_k,\nabla \cdot v) + \nu(\nabla u_k,\nabla v)  & = &  ( f,v ), \\
(\nabla \cdot u_k,q) & = & 0. \label{eqn:pnse2}
\end{eqnarray}
\end{alg}

\begin{alg}[Newton iteration for steady NSE]\label{alg:usualNewton} \ \\
Step 1: Choose $u_0\in X_h.$\\
Step $k$: Find $(u_k,p_k)\in (X_h,Q_h)$ satisfying for all $(v,q)\in (X_h,Q_h)$,
\begin{eqnarray} \label{eqn:nnse1}
b^*(u_{k-1},u_k,v) + b^*(u_{k},u_{k-1},v) - b^*(u_{k-1},u_{k-1},v) - (p_k,\nabla \cdot v) + \nu(\nabla u_k,\nabla v)  & = & (f,v), \\
(\nabla \cdot u_k,q) & = & 0. \label{eqn:nnse2}
\end{eqnarray}
\end{alg}

For sufficiently small data, the steady NSE and these iterations are well-posed \cite{laytonbook}.  Hence we can consider both the Picard and Newton iterations as fixed point iterations $u_{k+1}=g(u_k)$, where $g$ is a solution operator of \eqref{eqn:pnse1}-\eqref{eqn:pnse2} for Picard or \eqref{eqn:nnse1}-\eqref{eqn:nnse2} for Newton.  In this way, we can apply Anderson acceleration to both methods.  Below, we test both the Picard and Newton iterations with Anderson acceleration 
(but note that we apply only the basic Picard and Newton methods, i.e. without relaxation or other variation that can aid in convergence).  The linear systems are solved with a sparse direct solver.

For Picard iterations, we observe in Figure \ref{2DCav2} (left side) that Picard without acceleration is converging linearly, although slowly; after 40 iterations, the residual is still $O(10^{-4})$.  Anderson acceleration makes a very significant improvement in the Picard convergence, with big improvement offered by $m=1$ and $m=2$, and even more by $m=3$.  With $m=3$ the residual after 40 iterations is about $O(10^{-9})$, and it would take usual Picard about another 50 iterations to reach this level for its residual.  

On the right side of Figure \ref{2DCav2}, we display the convergence behavior of the Newton iterations.  We observe the usual Newton iteration diverges, but with Anderson acceleration it converges for each of $m=1,\ 2, \ 3$.  This is an example of Anderson acceleration increasing the radius of convergence of a fixed point iteration.  The $m=1$ Anderson accelerated Newton iteration with $m=1$ achieves a residual of $10^{-14}$ after just 13 iterations.
It is important to note that such an improvement with small $m$ is also observed in \cite{LWWY12}.

\begin{figure}[H]
\begin{center}
\includegraphics[width = .4\textwidth, height=.3\textwidth,viewport=0 0 600 395, clip]
{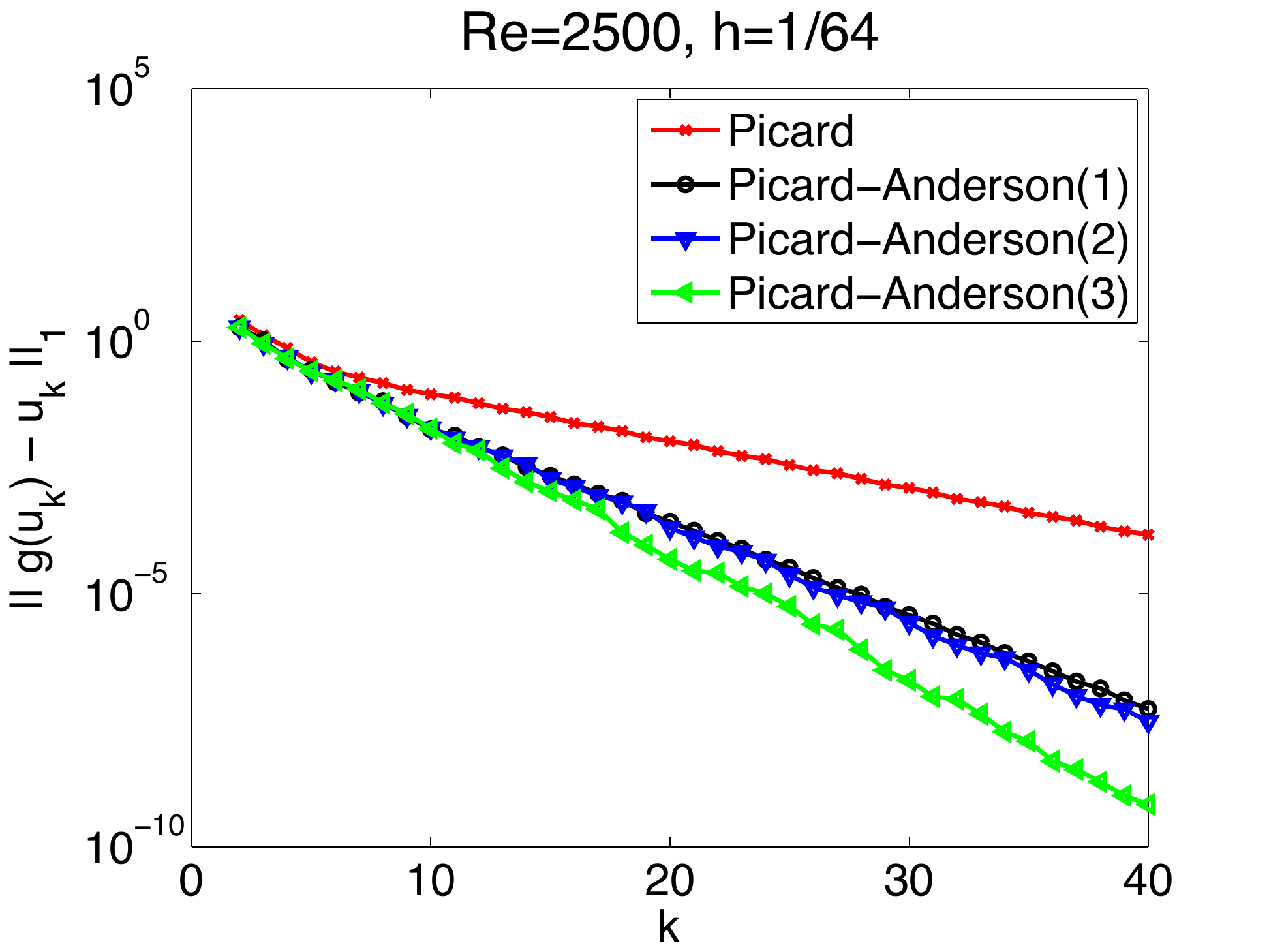}
\includegraphics[width = .4\textwidth, height=.3\textwidth,viewport=0 0 600 395, clip]
{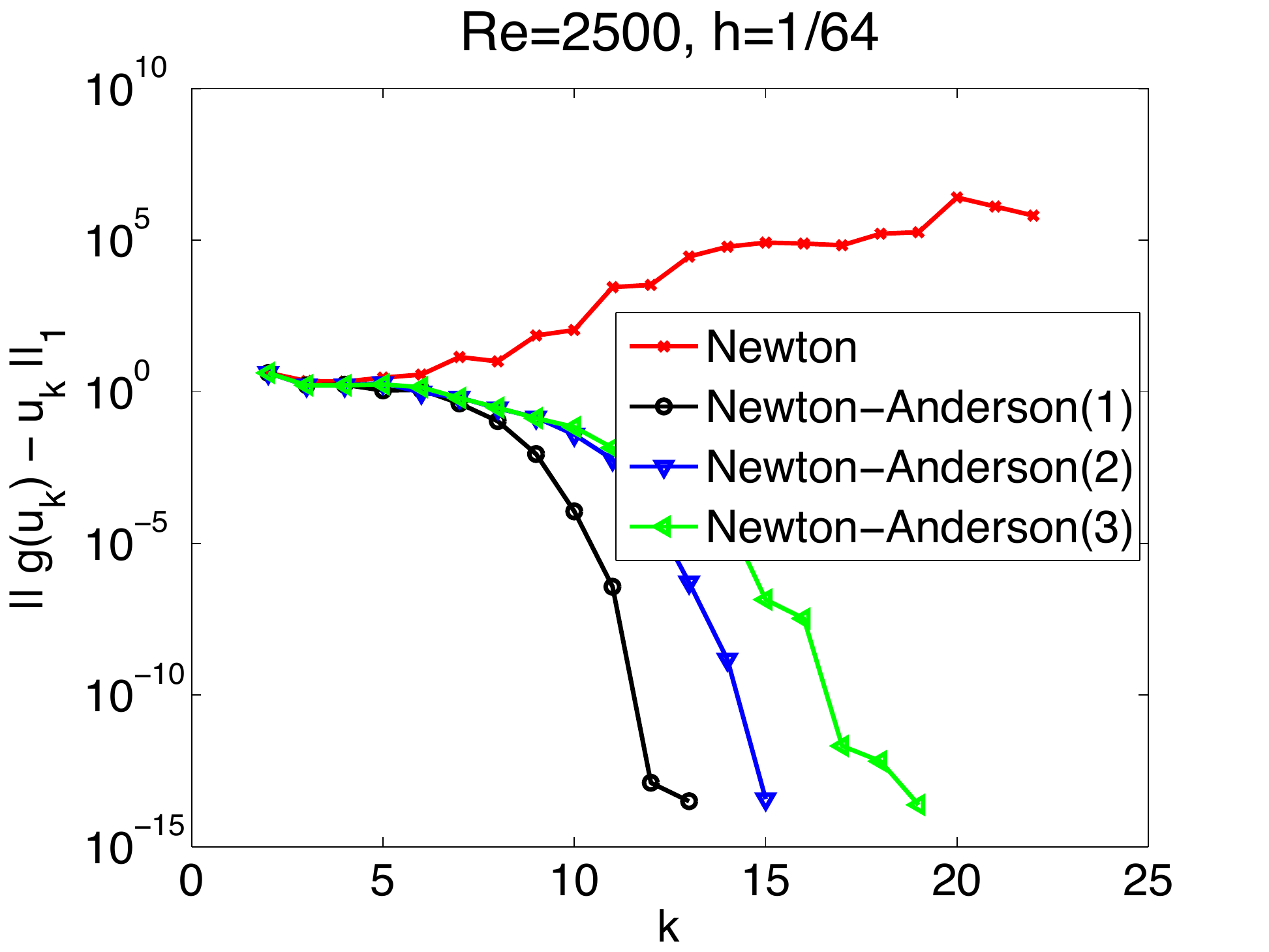}
\caption{\label{2DCav2} 
Convergence of the Anderson accelerated Picard and Newton iterations with $Re=2500$.}
\end{center}
\end{figure}
Results for $Re=6000$ are shown in Figure \ref{2DCav3}.  The usual Picard iteration fails here, as the residual over the last 20 iterations grows (although slightly), so $\kappa>1$ in this case.  Anderson acceleration helps Picard significantly, and will allow for convergence.  Usual Newton and $m=1,2$ Anderson-accelerated Newton iterations all failed (diverged), and we do not show these results in the plot.  The Anderson-accelerated Newton iteration with $m=3$ converged, and quite rapidly, reaching a residual of $O(10^{-14})$ in just 23 iterations.  
\begin{figure}[H]
\begin{center}
\includegraphics[width = .6\textwidth, height=.3\textwidth,viewport=0 0 900 345, clip]{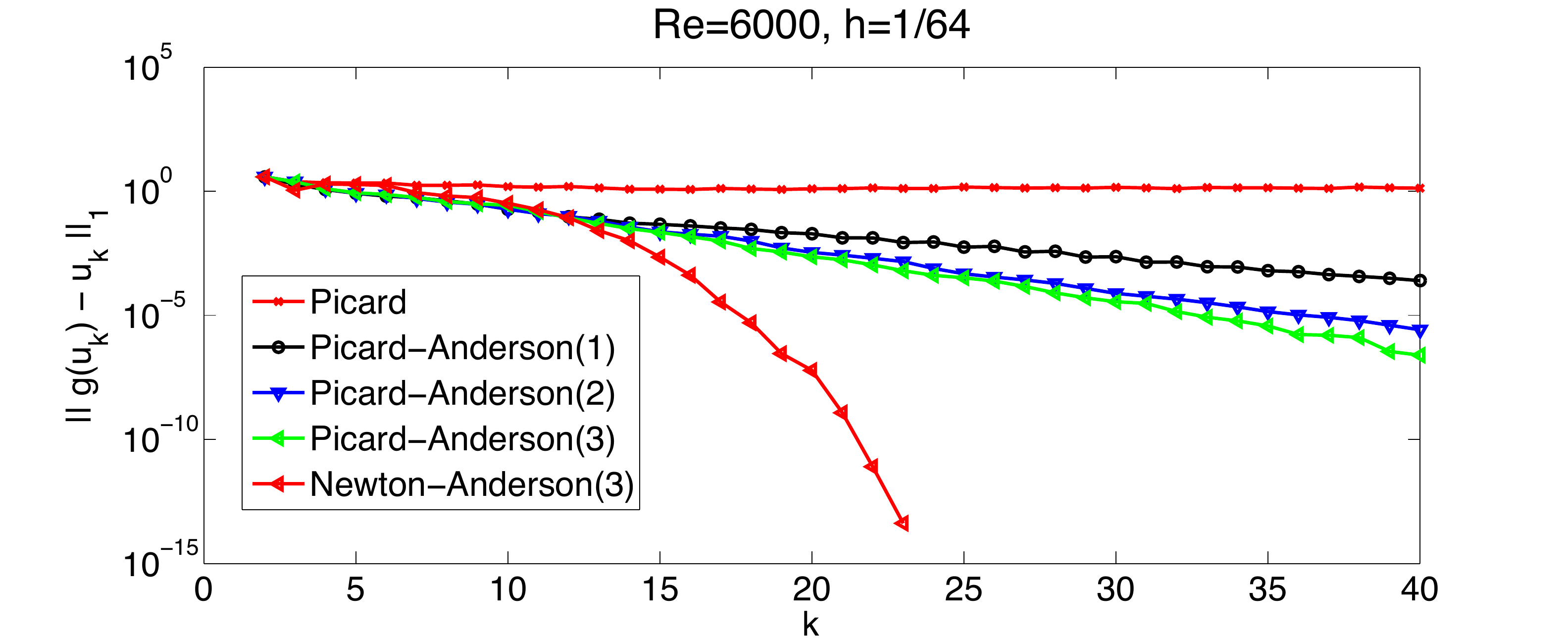}
\caption{\label{2DCav3} Convergence of the Anderson accelerated Picard and Newton iterations with $Re=6000$.}
\end{center}
\end{figure}
Box-plots of the $\theta_k$'s from the Picard iterations are shown in figure \ref{2DCav4}. For $Re=2500$ (left side), there is a clear
decreasing trend in distribution of $\theta$'s as $m$ increases, while for $Re=6000$ the boxplots look rather similar but with $m=1$ seemingly a little lower overall compared to $m=2$.  However, the lower values and outliers in these plots are critical, since one multiplication of a small factor takes many multiplications of larger factors to achieve the same residual decrease.
\begin{figure}[H]
\begin{center}
\includegraphics[width = .48\textwidth, height=.4\textwidth,viewport=0 0 400 345, clip]{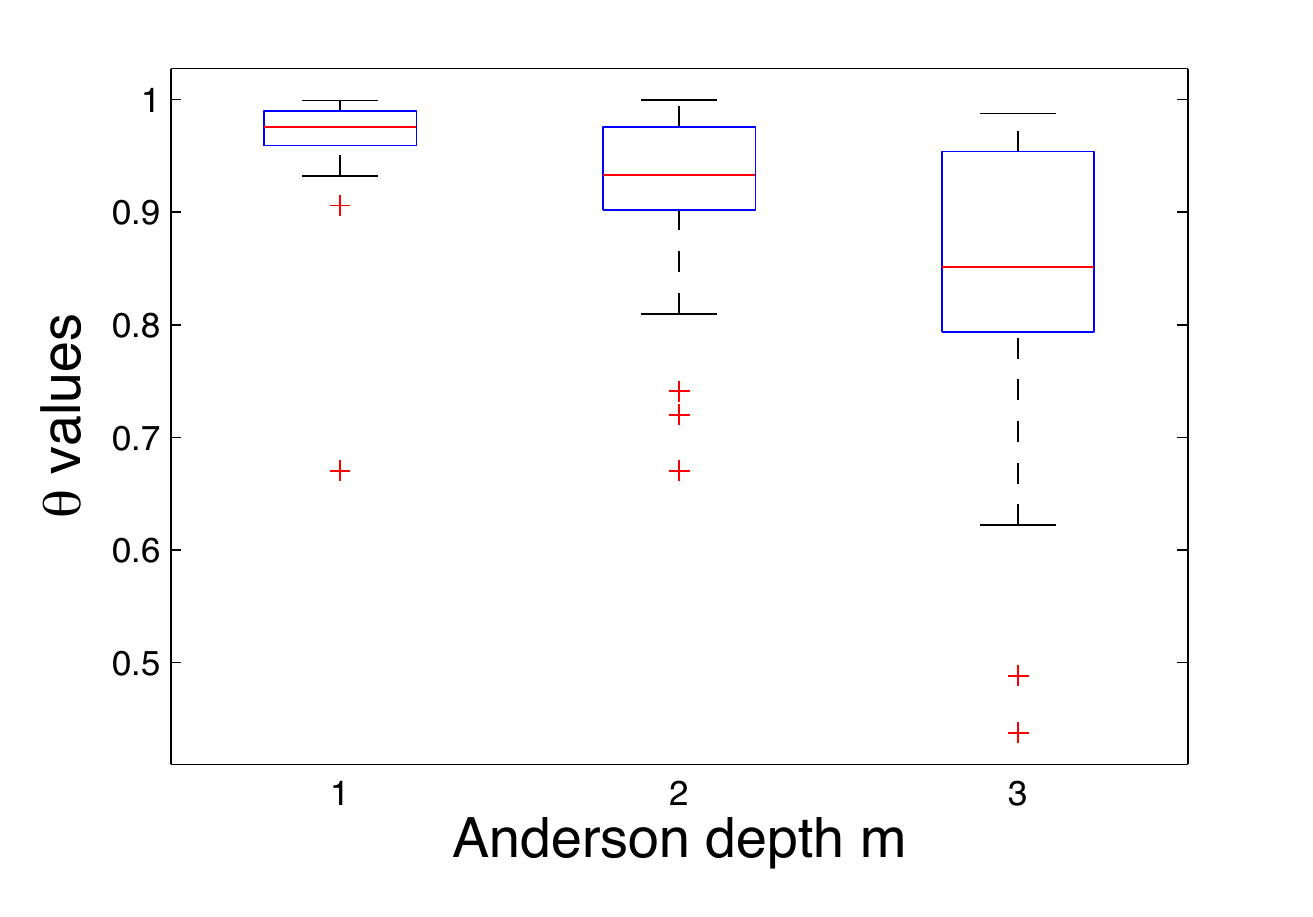}
\includegraphics[width = .48\textwidth, height=.4\textwidth,viewport=0 0 400 345, clip]{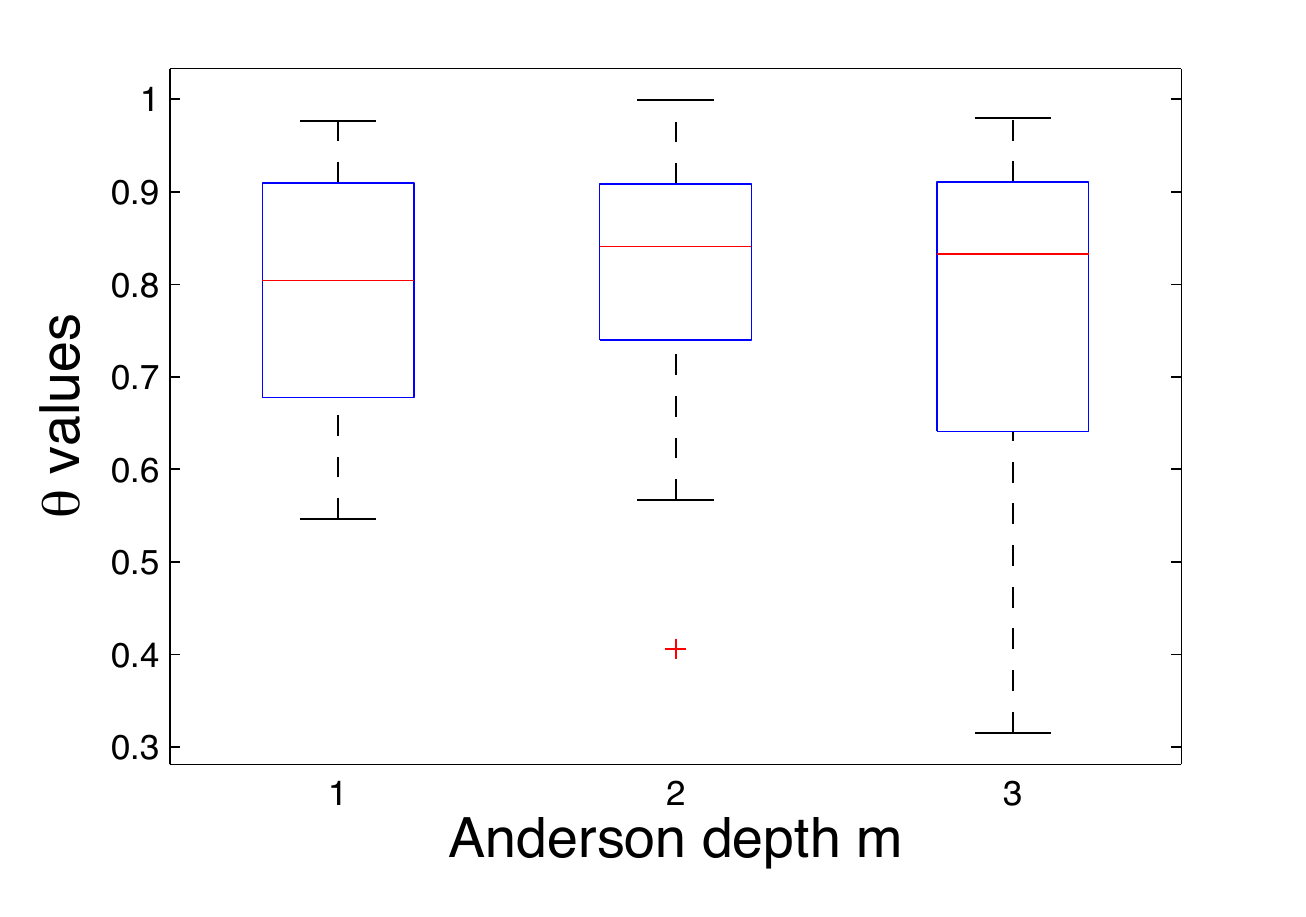}
	\caption{\label{2DCav4} Box-plots of $\theta$ values for the Picard iterations with $Re=2500$ (left) and $Re=6000$ (right).}
	\end{center}
\end{figure}

As a final part of this test, we compare 
the number of iterations needed to converge the residual for the Picard iteration in the $H^1$ norm to a tolerance
of $10^{-8}$, for varying $Re$ and varying $m$.  Results are shown in Table \ref{ReTable}, and again we observe a dramatic improvement from Anderson acceleration.  Even $m=1$ is enough to provide convergence up to $Re=10000$, although additional gain is made by increasing to $m=2$ and to $m=3$. It is interesting that convergence of the steady NSE is achieved for $Re=9000$ and $10000$ since the bifurcation point transition to transient flow is around $8000$ \cite{APQ02}, and thus the solutions found are seemingly unstable steady NSE solutions.  F denotes failure in the table, which we define as not converging within 500 iterations (but we note that inspection of the last few iterations of each of these that failed indicates the iterations are nowhere near, or even approaching, convergence).
 
\begin{table}[h!]
\begin{center}
\begin{tabular}{ccccc}
Re / m  &  0 & 1 & 2 & 3  \\ \hline
1000 & 36 & 32 & 29 & 26 \\ \hline
2000 & 48 & 41 &  40   & 34 \\ \hline
3000 & 86 & 49 & 45 & 37 \\ \hline
4000 & 158 & 59 & 46 & 40 \\ \hline
5000 & 363 & 55 & 48 &  44  \\ \hline
6000 & F & 62 & 55  & 49   \\ \hline 
7000 & F & 65 & 61  &  53 \\ \hline 
8000 & F & 78 & 70  & 58 \\ \hline 
9000 & F & 94 & 83  & 68 \\ \hline 
10000 & F & 105 & 97  & 71 \\ \hline 
\end{tabular}
\caption{ \label{ReTable} Shown above are the number of Picard iterations needed to converge the nonlinear residual for the steady
NSE up to $10^{-8}$ in the $H^1$ norm, for varying $Re$ and $m$.  F denotes a failure to reach convergence by 500 iterations.}
\end{center}
\end{table}

\subsection{Damping tests with a quasilinear equation}\label{subsec:qldamp}
The damping parameter $\beta$ of Algorithm \ref{alg:anderson} may become important
for convergence 
in the case of a fixed-point operator $g$ that is not
contractive.  A simple example of this type of problem is the quasilinear equation
$-\div(a(u) \grad u) = f$ in a domain $\Omega$ with homogeneous Dirichlet boundary
conditions.  In weak form
\begin{align}\label{eqn:ql-ex}
(a(u) \grad u, \grad v) = (f,v),
\end{align}
where $\forma$ denotes the $L^2$ inner product as in the above example.  
This can be thought of as a simple model of the effective nonlinearity in a 
steady Richards' type equation modeling the pressure $u$ in partially saturated media,
where $a(u)$ is the hydraulic conductivity which depends nonlinearly on the pressure
head via the saturation.
In this example we take $\Omega = (0,1)$ and
\[
a(u) = k + \tanh((u-u_0)/\eps), ~\text{ with } u_0 = 0.5, \quad k = 1.01, 
~\text{ and } \eps = 0.1.
\]
The function $f$ is chosen so the exact solution is $u^\ast = 10\sin(\pi x)$.
For the results below, the 1D problem is discretized with piecewise linear (P1)
finite elements with a uniform meshsize of $h=1/16384$.
For this example $m_k = 0$ for $k<m$ and $m$ otherwise.  The optimization problem
is solved with an economy $QR$ decomposition and $\theta_k$ is computed as described in
\S \ref{subsec:optgain}. The fixed-point operator $\tilde u^{k+1}= g(u^k)$ solves 
$(a(u^k) \grad g(u^k), \grad v) = (f,v)$, as in a basic Picard iteration.

As seen by the expansion \eqref{eqn:wk008}, 
the results of Theorems \ref{thm:m1} and \ref{thm:m2} as well as
Proposition \ref{prop:gm}, the damping factor $\beta_{k-1}$ affects the first order term
$\theta_k(1-\beta_{k-1} + \kappa\beta_{k-1})\nr{w_k}$,
but not the higher order terms. If the operator $g: X \goto X$ is not assumed 
contractive, then Assumption \ref{assume:contract} does not hold, and 
\eqref{eqn:wk008} then provides a blueprint for bounding $\nr{w_{k+1}}$ 
by $\nr{w_k}$ and higher-order terms involving differences of consecutive iterates
\[
\nr{w_{k+1}} \le \theta_k(1-\beta_{k-1} + \kappa\beta_{k-1})\nr{w_k}
+ \bigo\left(\nr{e_k}^2 \right) + \ldots + \bigo\left(\nr{e_{k-m}}^2\right),
\]
as the bounds of \S \ref{subsec:ew} controlling the difference between consecutive
iterates by the residuals do not hold in the noncontractive setting.  
It is remarked however that in the  
contractive setting of \S \ref{subsec:ew}, the control of the 
error terms $\nr{e_j}$ in terms of residuals $\nr{w_n}$ is independent of the damping.

This first order effect of the damping agrees with that seen for the error in the 
fixed-point iteration alone.  If the update step of the damped fixed-point iteration 
for operator $g$ with fixed-point $x^\ast$ is given by
$u_{k+1} = (1-\beta) u_k + \beta g(u_k)$ then
\[
u_{k+1} - u^\ast = (1-\beta)(u_k - u^\ast) + \beta(g(u_k) - g(u^\ast))
= (1-\beta)(u_k - u^\ast) + \beta g'(z_k^\ast(t);u_k - u^\ast),
\]
with  $z_k^\ast(t) = u^\ast + t(u_k - u^\ast)$.  
As this example is easily seen to satisfy Assumption \ref{assume:g},
 this immediately yields the norm-bound
$\nr{u_{k+1}-u^\ast} \le ((1-\beta) + \kappa \beta)\nr{u_k - u^\ast}$.

If $g$ is contractive then $\beta = 1$ (no damping) gives the best convergence rate.  
In this example however, Assumption \ref{assume:contract} does not hold globally. 
For instance near the boundary $u$ approaches zero and  
$k + (\tanh(u-u_0)/\eps)$ is close to $k-1 = 10^{-2}$, and the locally 
small ellipticity coefficient can cause failure of the method to converge.
This is demonstrated in the first plot of Figure \ref{fig:qldamp-smallm} 
where on the left the fixed-point iteration fails to converge to a tolerance of 
$10^{-5}$ with $\beta = \{1,0.8,0.6\}$, although more accuracy is attained with the 
damped iterations. It is also clear from this first plot that in the regime where
the the operator $g$ is contractive (the beginning of the calculation), the damping
has the predicted linear effect on the convergence rate.

The second and third plots of Figure \ref{fig:qldamp-smallm} show the effect of 
damping factors $\beta =\{1.0, 0.8, 0.6\}$ as well as an adaptive strategy for the
cases of Anderson depths $m=1$ and $m=2$.  The adaptive strategy is based on the
convergence rates found in Theorems \ref{thm:m1} - \ref{thm:m2} and 
Proposition \ref{prop:gm}, meaning $\beta$ plays an active role in decreasing the 
coefficient of the first order term particularly when $\theta$ is not small enough.
So $\beta_{adapt} = 1- \theta_k/2$ is chosen as a simple heuristic to set a factor
between $0.5$ and $1.0$ that is close to unity when $\theta_k$ is small and 
approaches $0.5$ as $\theta$ approaches one.

\begin{figure}[H]
\begin{center}
\includegraphics[width = .32\textwidth]{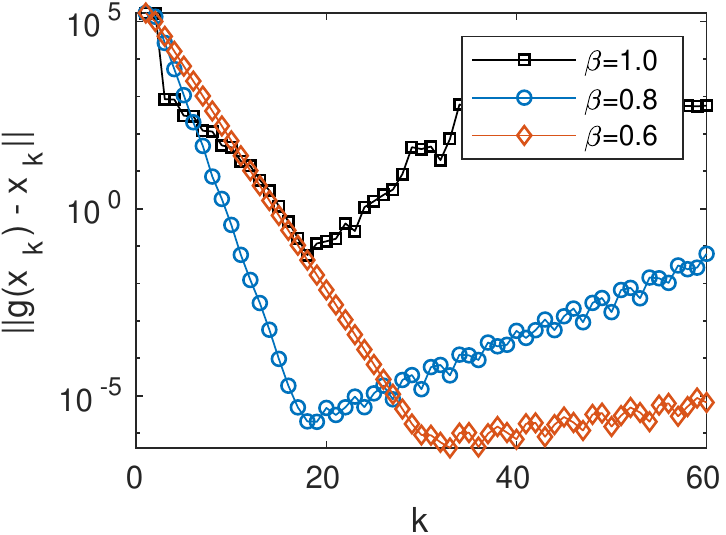}
~\includegraphics[width = .32\textwidth]{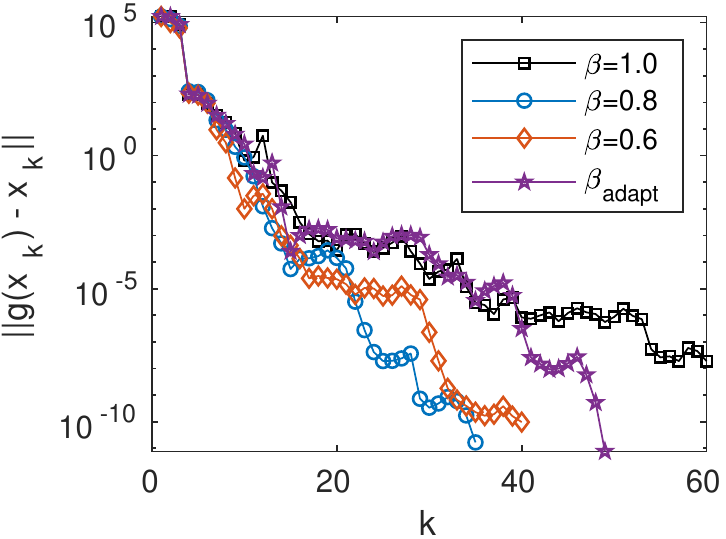}
~\includegraphics[width = .32\textwidth]{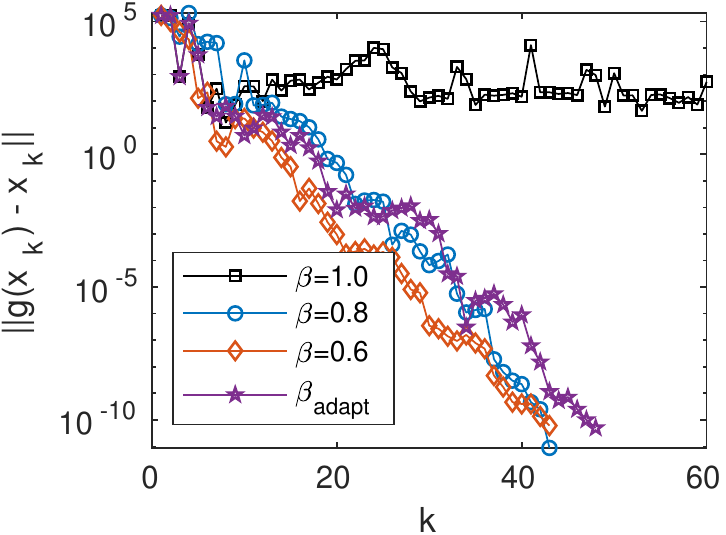}
\caption{\label{fig:qldamp-smallm} 
Left: Damped iterations for \eqref{eqn:ql-ex}
with $m=0$. Center: Damped iterations with $m=1$. Right: 
Damped iteration with $m=2$. }
\end{center}
\end{figure}

The three plots of Figure \ref{fig:qldamp-largem} 
illustrate the behavior of the same damping factors for 
greater Anderson depths, $m = \{4, 6, 8\}$. While the three examples in
Figure \ref{fig:qldamp-smallm} failed to converge without damping, the three examples
for greater depth $m$ in Figure \ref{fig:qldamp-largem} converged both with and 
without, but generally better with some damping.  The adaptive strategy, while not 
optimal, demonstrates proof of concept that with the gain $\theta_k$ taken into 
account, damping can be designed without extensive
experimentation or additional computation to stabilize the convergence for difficult 
problems. 
\begin{figure}[H]
\begin{center}
\includegraphics[width = .32\textwidth]{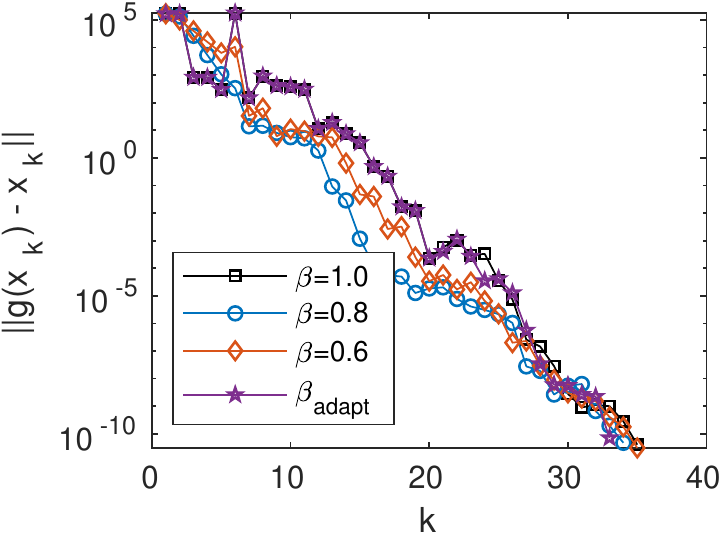}
~\includegraphics[width = .32\textwidth]{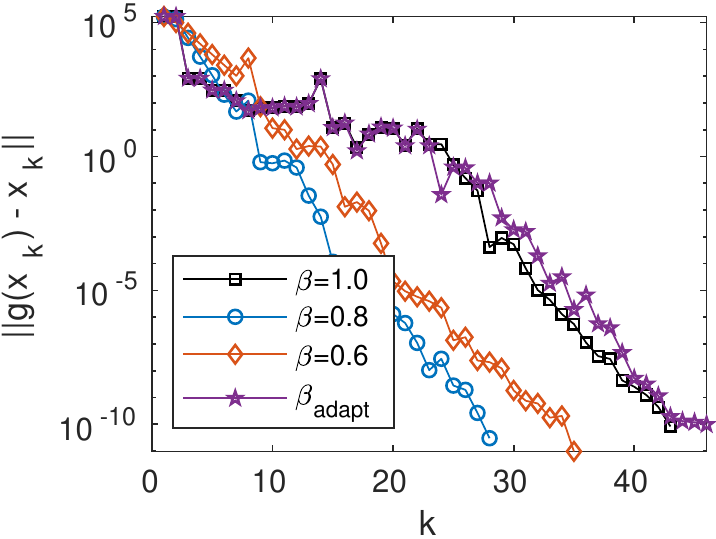}
~\includegraphics[width = .32\textwidth]{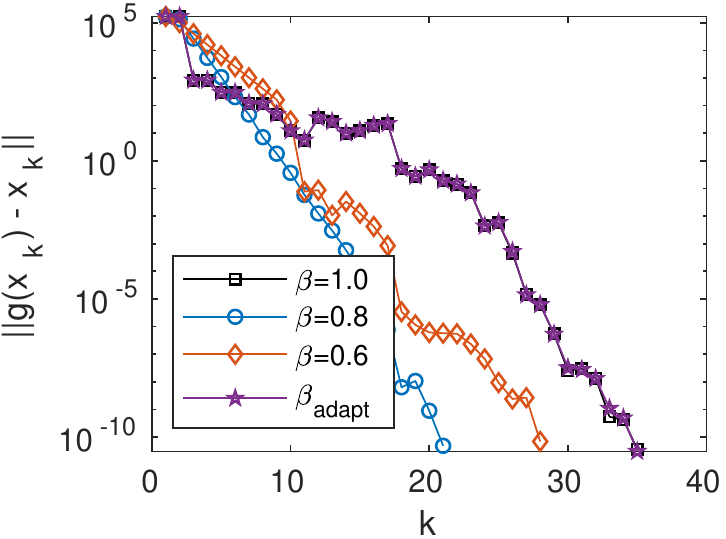}
\caption{\label{fig:qldamp-largem} 
Left: Damped iterations for \eqref{eqn:ql-ex}
with $m=4$. Center: Damped iterations with $m=6$. Right: 
Damped iteration with $m=8$.}
\end{center}
\end{figure}

\section{Conclusion}\label{sec:conc}
We have proven that Anderson acceleration improves the
first-order convergence rate for fixed point iterations, in agreement with 
decades of experimental results.  
We show that the increase in the linear convergence rate at each step depends
on the gain from the optimization step, but that additional quadratic error terms arise.  Hence as long as the gain from the 
optimization stage dominates these quadratic error terms, the convergence rate will be increased.  In particular for 
linearly convergent fixed point methods, an improved convergence rate from Anderson acceleration is expected; however,
for methods converging quadratically, the convergence will typically be slightly slowed.
Additionally, our results provide justification that both increasing 
the depth $m$ and using damping increases the radius of convergence.  
Results of numerical tests have been provided to illustrate our theory.


\end{document}